\begin{document}

\title{Spectral Residual Method for Nonlinear Equations on Riemannian Manifolds}


\author{Harry Oviedo         \and
        Hugo Lara 
}


\institute{H. Oviedo \at
              Mathematics Research Center, CIMAT A.C. Guanajuato, Mexico.\\
              \email{harry.oviedo@cimat.mx}           
           \and
           H. Lara \at
              Universidade Federal de Santa Catarina, Campus Blumenau, Blumenau, Brazil.\\
              \email{hugo.lara.urdaneta@ufsc.br}
}

\date{Received: date / Accepted: date}

\maketitle

\begin{abstract}
In this paper, the spectral algorithm for nonlinear equations (SANE) is adapted to
the problem of finding a zero of a given tangent vector field
on a Riemannian manifold. The generalized version of SANE uses,
in a systematic way, the tangent vector field as a search direction and
a continuous real--valued function that adapts this direction and ensures
that it verifies a descent condition for an associated merit function.
In order to speed up the convergence of the proposed method, we incorporate a
Riemannian adaptive spectral parameter in combination with a non--monotone
globalization technique. The global convergence of the proposed procedure
is established under some standard assumptions. Numerical results indicate that our algorithm is very effective and efficient
solving tangent vector field on different Riemannian manifolds and competes
favorably with a Polak--Ribi\'ere--Polyak Method recently published and other methods
existing in the literature.
\keywords{Tangent vector field \and Riemannian manifold \and Nonlinear system of equations \and Spectral residual method \and Non--monotone line search.}
\subclass{65K05, 90C30, 90C56, 53C21.}
\end{abstract}

\section{Introduction}
\label{sec:1}
In this work, we consider the problem of finding a zero of a tangent vector field $F(\cdot)$ over a Riemannian manifold $\mathcal{M}$, with an associated Riemannian metric $\langle \cdot,\cdot \rangle$ (a local inner product which induces a corresponding local metric). The problem can be mathematically formulated as the solution of the following nonlinear equation
\begin{equation}
  F(X) = 0_{X}, \label{problem}
\end{equation}
where $F:\mathcal{M}\to \mathcal{TM}$ is a continuously differentiable tangent vector field, and $\mathcal{TM} := \cup_{X\in\mathcal{M}}T_X\mathcal{M}$ denotes the tangent bundle of $\mathcal{M}$, i.e., $\mathcal{TM}$ is the union of all tangent spaces at points in the manifold. Here, $0_{X}$ denotes the zero vector of the tangent space $T_X{\mathcal{M}}$. This kind of problem appears frequently in several applications, for example: statistical principal component analysis \cite{oja1989neural}, where the Oja's flow induces the associated vector field, total energy minimization in electronic structure calculations \cite{martin2020electronic,saad2010numerical,zhang2014gradient}, linear eigenvalue problems \cite{manton2002optimization,wen2016trace}, dimension reduction techniques in pattern recognition \cite{kokiopoulou2011trace,turaga2008statistical,zhang2018robust}, Riemannian optimization problems, where the Riemannian gradient flow leads to the associated tangent vector field \cite{absil2009optimization,edelman1998geometry,ring2012optimization}, among others.\\

Problem \eqref{problem} is closely related to the problem of minimizing a differentiable function over the manifold $\mathcal{M}$,
\begin{equation}
  \min \mathcal{F}(X) \quad \textrm{s.t.} \quad X\in\mathcal{M}, \label{problem1}
\end{equation}
where $\mathcal{F}:\mathcal{M} \to \mathbb{R}$ is a smooth function. Different iterative methods have been developed for solving \eqref{problem1}. Some popular schemes are based on gradient method \cite{cedeno2018projected,iannazzo2018riemannian,manton2002optimization,oviedoimplicit,oviedo2019scaled,oviedo2020two,oviedo2019non}, conjugate gradient methods \cite{absil2009optimization,edelman1998geometry,zhu2017riemannian}, Newton’s method \cite{absil2009optimization,sato2014riemannian}, or quasi--Newton methods \cite{absil2009optimization}. All these numerical methods can be used to find a zero of the following tangent vector field equation,
\begin{equation}
  \nabla_{\mathcal{M}}\mathcal{F}(X) = 0_{X}, \label{grad_field}
\end{equation}
where $\nabla_{\mathcal{M}}\mathcal{F}(\cdot)$ denotes the Riemannian gradient of $\mathcal{F}(\cdot)$, which is a particular case of problem \eqref{problem}. The Riemannian line--search methods, designed to solve the optimization problem \eqref{problem1} construct a sequence of points using the following recursive formula
\begin{equation}
  X_{k+1} = R_{X_k}[\xi_{X_k}], \label{line_search}
\end{equation}
where $R_X:T_X\mathcal{M}\to \mathcal{M}$ is a retraction (see Definition \ref{def1}), and $\xi_{X_k}\in T_{X_k}\mathcal{M}$ is a descent direction, i.e, $\xi_{X_k}$ verifies the inequality $\langle\nabla_\mathcal{M}\mathcal{F}(X_k),\xi_{X_k}\rangle < 0$ for all $k\geq 0$. Among the Riemannian line--search methods, the Riemannian gradient approach exhibits the lowest cost per iteration. This method uses the gradient vector field $\nabla_{\mathcal{M}}\mathcal{F}(\cdot)$ to define the search direction by $\xi_{X_k} = -\nabla_{\mathcal{M}}\mathcal{F}(X_k)$, at each iteration.\\

In the literature, there are some iterative algorithms addressing the problem \eqref{problem}. In \cite{adler2002newton,dedieu2003newton,li2005convergence} were developed several Riemannian Newton methods for the solution of tangent vector fields on general Riemannian manifolds. Among the features of Newton's method, the requirement of using second--order information and geodesics (that involves the computation of exponential mapping) to ensure keeping into the corresponding manifold, leads to the growth of computational cost. In addition, the authors in \cite{breiding2018convergence} proposed a Riemannian Gauss--Newton method to address the solution of \eqref{problem} through the optimization problem \eqref{problem1}. Recently, in \cite{yao2020riemannian} was introduced a Riemannian conjugate gradient to deal with the numerical solution of \eqref{problem}, that does not need derivative computation (it does not use the Jacobian of $F(\cdot)$), and that incorporates the use of retractions (see Definition \ref{def1}), which is a mapping that generalizes the definition of geodesics, and that was introduced by Absil in \cite{absil2009optimization} to deal with optimization problems on matrix manifolds.\\

In the Euclidean case $\mathcal{M} = \mathbb{R}^{n}$ in \eqref{problem}, i.e., for the solution of standard nonlinear system of equations, the authors in \cite{cruz2003nonmonotone} introduced a method called SANE, which uses the residual $\pm F(X_k)$ as a search direction. Then the trial point, at each iteration, is computed by $X_{k+1}= X_k \pm \tau_k F(X_k)$, where $\tau_k$ is a spectral coefficient based on the Barzilai--Borwein step--size \cite{barzilai1988two,raydan1993barzilai}. This iterative process uses precisely the functional $F(\cdot)$, in order to define the search direction. It's feature of been a derivative--free procedure, is highly attractive, lowering the storage requirements and the computational cost per iteration.\\

Motivated by the Riemannian gradient and SANE methods, in this paper, we introduce RSANE, which is a generalization of SANE to tackle the numerical solution of nonlinear equations on Riemannian manifolds. In particular, we modified the update formula of SANE by incorporating a retraction, in order to guarantee that each point $X_k$ belongs to the desired manifold. By following the ideas of the Riemannian Barzilai--Borwein method developed by Iannazzo et.al in \cite{iannazzo2018riemannian}, we propose an extension of the spectral parameter $\tau_k$ to the case of Riemannian manifolds, using mappings so--called \emph{scaled vector transport}. In addition, we present the convergence analysis of the proposed method obtained under the Zhang--Hager globalization strategy \cite{zhang2004nonmonotone}. Finally, some numerical experiments are reported to illustrate the efficiency and effectiveness of our proposal.\\

The rest of this paper is organized as follow. To do this article self--contained, we briefly review, in Section \ref{sec:2}, some concepts and tools
from Riemannian geometry that it can be founded in \cite{absil2009optimization}. In Section \ref{sec:3}, we present our proposed Riemannian spectral residual method (RSANE) for solving \eqref{problem}. Section \ref{sec:4} is devoted to the convergence analysis concerning our proposed method. In Section \ref{sec:5} numerical tests are carried out, in order to illustrate the good performance of our approach considering the computing of eigenspaces associated to given symmetric matrices using both simulated data and real data. Finally,  conclusions and perspectives are provided in Section \ref{sec:6}.

\section{Preliminaries on Riemannian Geometry}
\label{sec:2}
In this section, we briefly review some notions and tools of Riemannian geometry crucial for understanding this paper, by summarizing \cite{absil2009optimization}.\\

Let $\mathcal{M}$  be a Riemannian manifold with an associated Riemannian metric $\langle \cdot,\cdot \rangle$, and let $T_{X}\mathcal{M}$ be
its tangent vector space at a given point $X\in \mathcal{M}$. In addition, let $\mathcal{F}:\mathcal{M}\to \mathbb{R}$ a smooth scalar function defined on the Riemannian manifold $\mathcal{M}$, the Riemannian gradient of $\mathcal{F}(\cdot)$ at $X$, denoted by $\nabla_{\mathcal{M}}\mathcal{F}(X)$, is defined as the unique element of $T_{X}\mathcal{M}$ that verifies
$$ \langle \nabla_{\mathcal{M}}\mathcal{F}(X),\xi_{X} \rangle = \mathcal{DF}(X)[\xi_X], \quad \forall \xi_{X}\in T_{X}\mathcal{M},  $$
where $\mathcal{DF}(X)[\xi_X]$ is the function that takes any point $X\in\mathcal{M}$ to the directional derivative of $\mathcal{F}(\cdot)$ in the direction $\xi_X$, evaluated at $X\in\mathcal{M}$. In the particular case that $\mathcal{M}$ is a Riemannian submanifold of an Euclidean space $\mathcal{E}$, we have an explicit evaluation of the gradient: let $\mathcal{\bar{F}}: \mathcal{E} \to \mathbb{R} $ a smooth function defined on $\mathcal{E}$ and let $\mathcal{F}:\mathcal{M}\subset\mathcal{E}\to \mathbb{R}$, then the Riemannian gradient of $\mathcal{F}(\cdot)$ evaluated at $X\in\mathcal{M}$ is equal to the orthogonal projection of the standard gradient of $\mathcal{\bar{F}}(\cdot)$ onto $T_{X}\mathcal{M}$, that is,
\begin{equation}
  \nabla_{\mathcal{M}}\mathcal{F}(X) = P_{X}[\nabla \mathcal{\bar{F}}(X)].
\end{equation}
This result provides us an important tool to compute the Riemannian gradient, which will be useful in the experiments section.\\

Another fundamental concept for this work is \emph{retraction}. This can be seen as a smooth function that pragmatically approximates the notion of geodesics \cite{edelman1998geometry}. Now we present its rigorous definition.
\begin{definition}[\cite{absil2009optimization}]
\label{def1}
A retraction on a manifold $\mathcal{M}$ is a smooth
mapping $R[\cdot]$ from the tangent bundle $\mathcal{TM}$ onto $\mathcal{M}$ with the following properties.
Let $R_X[\cdot]$ denote the restriction of $R[\cdot]$ to $T_X\mathcal{M}$.
\begin{enumerate}
  \item $R_X[0_X] = X$, where $0_X$ denotes the zero element of $T_X\mathcal{M}$
  \item With the canonical identification, $T_{0_X}T_{X}\mathcal{M}\simeq T_X\mathcal{M}$, $R_X[\cdot]$ satisfies
  $$ \mathcal{D}R_X[0_X] = \textrm{id}_{T_X\mathcal{M}},$$
  where $\textrm{id}_{T_X\mathcal{M}}$ denotes the identity mapping on $T_X\mathcal{M}$.

\end{enumerate}
\end{definition}
The second condition in Definition \ref{def1} is known as \emph{local rigidity condition}.\\

The concept of vector transport, which appears in \cite{absil2009optimization}, provides us a tool to perform operations between two or more vectors that belong to different tangent spaces of $\mathcal{M}$, and can be seen as a relaxation of the purely Riemannian concept of \emph{parallel transport} \cite{edelman1998geometry}.
\begin{definition}[\cite{absil2009optimization}]
\label{def2}
A vector transport $\mathcal{T}[\cdot]$ on a manifold $\mathcal{M}$ is a smooth mapping
$$ \mathcal{T}: \mathcal{TM}\oplus \mathcal{TM} \rightarrow  \mathcal{TM}: (\eta,\xi) \mapsto \mathcal{T}_{\eta}[\xi]\in \mathcal{TM}, $$
satisfying the following properties for all $X\in\mathcal{M}$ where $\oplus$ denote the Whitney sum, that is,
$$ \mathcal{TM}\oplus \mathcal{TM} = \{(\eta,\xi):\eta,\xi\in T_X\mathcal{M},X\in\mathcal{M}\}. $$
\begin{enumerate}
  \item There exists a retraction $R[\cdot]$, called the retraction associated with $\mathcal{T}[\cdot]$, such that
  $$ \pi(\mathcal{T}_{\eta}[\xi]) = R_X(\eta), \quad \eta,\xi\in T_X\mathcal{M}, $$
  where $\pi(\mathcal{T}_{\eta}[\xi])$ denotes the foot of the tangent vector $\mathcal{T}_{\eta}[\xi]$.

  \item $\mathcal{T}_{0_X}[\xi] = \xi$ for all $\xi\in T_{X}\mathcal{M}$.
  \item $\mathcal{T}_{\eta}[a\xi + b\zeta] = a\mathcal{T}_{\eta}[\xi] + b\mathcal{T}_{\eta}[\zeta]$, for all $a,b\in\mathbb{R}$ and $\eta,\xi,\zeta\in T_{X}\mathcal{M}$.
\end{enumerate}
\end{definition}

Next, the concept of isometry \cite{zhu2017riemannian} is established, which is a property satisfied by some vector transports.
\begin{definition}[\cite{zhu2017riemannian}]
\label{def3}
A vector transport $\mathcal{T}[\cdot]$ on a manifold $\mathcal{M}$ is called isometric if it satisfies
\begin{equation}
  \langle \mathcal{T}_{\eta}[\xi],\mathcal{T}_{\eta}[\xi] \rangle = \langle \xi,\xi \rangle, \label{IsometricT}
\end{equation}
for all $\eta,\xi\in T_{X}\mathcal{M}$, where $R_{X}[\cdot]$ is the retraction associated with $\mathcal{T}[\cdot]$.
\end{definition}

\section{Spectral Approach for Tangent Vector Field on Riemannian Manifolds}
\label{sec:3}
In this section, we shall establish our proposal RSANE. An intuitive way to solve \eqref{problem} is to promote the reduction of the residual $||F(\cdot)||$, which we can achieve by solving the following auxiliar manifold constrained optimization problem
\begin{equation}
  \min_{X\in\mathcal{M}} \mathcal{F}(X) = \frac{1}{2}||F(X)||^{2}. \label{problem2}
\end{equation}
We can deal with this optimization model using some Riemannian optimization method. Nevertheless, we are interested in directly solving the Riemannian nonlinear equation \eqref{problem}. For this purpose, we consider the following iterative method, based on the SANE method,
\begin{equation}
  X_{k+1} = X_k - \tau_k F(X_k). \label{eq1}
\end{equation}
Firstly, the vector $-F(X_k)$ is not necessarily a descent direction for the merit function $\mathcal{F}(\cdot)$ and secondly, that $X_{k+1}$ does not necessarily belong to the manifold $\mathcal{M}$. We can overcome the first one, by modifying this vector with the sign of $\langle  \nabla_{\mathcal {M}}\mathcal{F}(X_k),F(X_k)\rangle$, following the same idea used in SANE method, in order to force $\pm F(X_k)$ satisfies the descent condition. Observe that the Riemannian gradient method of $\mathcal{F}(\cdot)$ can be computed by
\begin{equation}
\nabla_{\mathcal{M}}\mathcal{F}(X) = (JF(X))^{*}[F(X)], \quad \forall X\in\mathcal{M}, \label{eq2}
\end{equation}
that is, to compute the Riemannian gradient of $\mathcal{F}(\cdot)$ at $X$, we need to calculate the adjoint of the Jacobian of $F(\cdot)$ evaluated at $X$.\\

On the other hand, the second disadvantage can be easily remedied by incorporating a retraction, similarly to the scheme \eqref{line_search}. Keeping in mid all theses considerations, we now propose our Riemannian spectral residual method, which computes the iterates recursively by
\begin{equation}
X_{k+1} = R_{X_{k}}[\tau_k Z_k], \label{eq3}
\end{equation}
where $\tau_k>0$ represents the step--size, $R_{X}[\cdot]$ is a retraction and the search direction is determined by
\begin{equation}
Z_{k} = -s_{\theta}(\langle \nabla_{\mathcal{M}}\mathcal{F}(X_k),F(X_k) \rangle) F(X_k), \label{eq4}
\end{equation}
where $s_{\theta}(\cdot):\mathbb{R}-\{0\}\to \{-1,1\}$ is defined by $s(x) = \frac{x}{|x|}$. Observe that $s(\cdot)$ is a continuous function for all $x\neq 0$, a crucial property to our convergence analysis.\\

\subsection{A Nonmonotone Line Search with a Riemannian Spectral Parameter}
In the scenario of the solution of nonlinear systems of equations over $\mathbb{R}^{n}$, the SANE method uses a spectral parameter $\tau_k$ inspired by the Barzilai--Borwein step--size, originally introduced in \cite{barzilai1988two} to speed up the convergence of gradient--type methods, in the context of optimization. SANE computes this spectral parameter as follow
\begin{equation}
\tau_{k+1}^{BB} = \textrm{sgn}_k\frac{S_{k}^{\top}S_{k}}{S_{k}^{\top}Y_{k}}, \label{eq5}
\end{equation}
where $S_{k} = X_{k+1} - X_{k}$, $Y_{k} = F(X_{k+1}) - F(X_k)$, $\textrm{sgn}_k = s( F(X_k)^{\top}J(X_k)F(X_k) )$ and $J(\cdot)$ denotes the Jacobian matrix of the vector--valued function $F:\mathbb{R}^{n}\to \mathbb{R}^{n}$.\\

In the framework of Riemannian manifolds, the vectors $F(X_{k+1})\in T_{X_{k+1}}\mathcal{M}$ and $F(X_k)\in T_{X_{k}}\mathcal{M}$ lie in different tangent spaces, then the difference between these vectors may not be well defined (this is only well defined over linear manifolds). The same drawback occurs with the difference between the points $X_{k+1}$ and $X_k$. Therefore, we cannot directly use the parameter \eqref{eq5} to address the numerical solution of \eqref{problem}. In \cite{iannazzo2018riemannian} Iannazzo et. al. was extended the Barzilai--Borwein step--sizes in the context of optimization on Riemannian manifolds, through the use of a vector transport (see Definition \ref{def2}). This strategy transports the calculated directions to the correct tangent space, providing a way to overcome the drawback. Guided by the descriptions contained in \cite{iannazzo2018riemannian}, we propose the following generalization of the spectral parameter \eqref{eq5},
\begin{equation}
\tau_{k+1}^{RBB1} = s(\sigma_k)\frac{\langle \hat{S}_{k},\hat{S}_{k}\rangle}{\langle \hat{S}_{k},\hat{Y}_{k}\rangle}, \label{eq6}
\end{equation}
where $\sigma_k = \langle \nabla_{\mathcal{M}}\mathcal{F}(X_k),F(X_k) \rangle$,
\begin{equation}
\hat{S}_{k} = \mathcal{T}_{\tau_kZ_k}[\tau_k Z_k] = -\tau_{k}s(\sigma_k) \mathcal{T}_{\tau_kZ_k}[F(X_k)] \label{eq7}
\end{equation}
and
\begin{equation}
\hat{Y}_{k}  = F(X_{k+1}) - \mathcal{T}_{\tau_kZ_k}[F(X_k)] = F(X_{k+1}) + \frac{1}{\tau_k s(\sigma_k)} \hat{S}_k, \label{eq8}
\end{equation}
where $\mathcal{T}[\cdot]$ is any vector transport satisfying the Ring--Wirth non--expansive condition,
\begin{equation}
||\mathcal{T}_{\eta_X}[\xi_X]|| \leq ||\xi_X||, \quad \forall \xi_X,\eta_X\in T_X\mathcal{M}. \label{eq9}
\end{equation}

Another alternative for the spectral parameter is
\begin{equation}
\tau_{k+1}^{RBB2} = s(\sigma_k)\frac{\langle \hat{S}_{k},\hat{Y}_{k}\rangle}{\langle \hat{Y}_{k},\hat{Y}_{k}\rangle}. \label{eq6a}
\end{equation}
In order to take advantage of both spectral parameters $\tau_{k+1}^{RBB1}$ and $\tau_{k+1}^{RBB2}$, we adopt the following adaptive strategy
\begin{equation}
\tau^{RBB}_{k+1} = \left\{ \begin{array}{ll}
\tau_{k+1}^{RBB1} & \textrm{for even k};\\
\tau_{k+1}^{RBB2} & \textrm{for odd k}.
\end{array} \right. \label{RBBstep}
\end{equation}

Note that it is always possible to define a transporter (a function that sends vectors from a tangent space to another tangent space) that satisfies the condition \eqref{eq9}, by scaling,
\begin{equation}\label{eq10}
\mathcal{T}_{\eta_X}^{\textrm{scaled}}[\xi_X] = \left\{ \begin{array}{ll}
\mathcal{T}_{\eta}[\xi] & \textrm{if } ||\mathcal{T}_{\eta_X}[\xi_X]|| \leq ||\xi_X||;\\
\frac{||\xi||}{||\mathcal{T}_{\eta}[\xi]||}\mathcal{T}_{\eta}[\xi] & \textrm{otherwise}.
\end{array} \right.
\end{equation}
This function, introduced by Sato and Iwai in \cite{sato2015new}, is referred as \emph{scaled vector transport}. Observe that \eqref{eq10} is not necessarily a vector transport. However for the extension of the spectral parameter to the setting of Riemannian manifolds, it is not strictly mandatory. In fact, it is enough having a non--expansive transporter available. Therefore, we will use the scaled vector transport \eqref{eq10}, in the construction of the vectors $\hat{S}_{k}$ and $\hat{Y}_{k}$ in equations \eqref{eq7}--\eqref{eq8}.\\

Since the spectral parameter $\tau_{k}^{RBB}$ does not necessarily reduces the value of the merit function $\mathcal{F}(\cdot)$ at each iteration, the convergence result could be invalid. We can overcome this drawback by incorporating a globalization strategy,  which guarantees convergence by regulating the step--size $\tau_k$ only occasionally (see \cite{cruz2003nonmonotone,raydan1997barzilai,zhang2004nonmonotone}). In the seminal paper \cite{cruz2003nonmonotone}, the authors consider the globalization technique proposed by Grippo et.al. in \cite{grippo1986nonmonotone}, in the definition of SANE method.

We could define our Riemannian generalization of SANE incorporating this non--monotone technique, and so analyze the convergence following the ideas described in \cite{cruz2003nonmonotone,iannazzo2018riemannian}. Instead of that, in this work, to define RSANE we adopt a more elegant globalization strategy proposed by Zhang and Hager in \cite{zhang2004nonmonotone}. Specifically, we compute $\tau_k = \delta^{h} \tau_k^{RBB}$ where $h\in\mathbb{N}$ is the smallest integer satisfying
\begin{equation}
\mathcal{F}(R_{X_k}[\tau_k Z_k]) \leq C_k + \rho_1\tau_k \langle \nabla_{\mathcal{M}}\mathcal{F}(X_{k}), Z_k \rangle, \label{eq11}
\end{equation}
where each value $C_{k+1}$ is given by a convex combination of $\mathcal{F}(X_{k+1})$ and the previous $C_k$ as
$$C_{k+1} = \frac{\eta Q_kC_k + \mathcal{F}(X_{k+1})}{Q_{k+1}},$$
for $Q_{k+1} = \eta Q_k + 1$, starting at $Q_0 = 1$ and $C_0 = \mathcal{F}(X_0)$. In the sequel our generalization RSANE will be described in detail.

\begin{algorithm}[H]
\begin{algorithmic}[1]
\REQUIRE Let $X_{0}\in \mathcal{M}$ be the initial guess, $\tau>0$, $0<\tau_m \leq \tau_M < \infty$, $\eta\in[0,1)$, $\rho_1,\epsilon,\epsilon_1,\delta \in (0,1)$, $Q_0 = 1$, $C_0 = \mathcal{F}(X_0)$,  $k=0$.
\WHILE{ $|| F(X_k) || > \epsilon$ }
\STATE $\sigma_k = \langle \nabla_{\mathcal{M}}\mathcal{F}(X_k),F(X_k) \rangle$,
\IF{ $| \sigma_k | < \epsilon_1 ||F(X_k)||^2$  }
\STATE stop the process.
\ENDIF
\STATE $Z_k = - s(\sigma_k) F(X_k)$,
\WHILE { $\mathcal{F}(R_{X_{k}}[\tau Z_k]) > C_k - \rho_1\epsilon_1\tau ||F(X_{k})||^{2}$}
\STATE $\tau \leftarrow \delta\tau$,
\ENDWHILE
\STATE $\tau_k = \tau$,
\STATE $X_{k+1} = R_{X_{k}}[\tau_k Z_k]$,
\STATE $Q_{k+1} = \eta Q_k + 1$ and $ C_{k+1} = (\eta Q_kC_k + \mathcal{F}(X_{k+1}) )/Q_{k+1}$,
\STATE $\hat{S}_k = -\tau_k s(\sigma_k) \mathcal{T}_{\tau_k Z_k}[F(X_k)]$  and  $\hat{Y}_k = F(X_{k-1}) + \frac{1}{\tau_k s(\sigma_k)} \hat{S}_k$,
\STATE $\tau_{k+1}^{RBB} = s(\sigma_k)\frac{\langle \hat{S}_{k},\hat{S}_{k}\rangle}{\langle \hat{S}_{k},\hat{Y}_{k}\rangle}$,
\STATE $ \tau = \max(\min( \tau_{k+1}^{RBB}, \tau_M ),\tau_m )$.
\STATE $k \leftarrow k+1$.
\ENDWHILE
\end{algorithmic}
\caption{Spectral Residual Method for tangent vector field (RSANE)}\label{Alg1}
\end{algorithm}

\begin{remark}
In Algorithm \ref{Alg1}, we replace the nonmonotone condition \eqref{eq11} by
\begin{equation}
\mathcal{F}(R_{X_k}[\tau Z_k]) \leq C_k - \rho_1\epsilon_1\tau ||F(X_k)||^2. \label{eq11b}
\end{equation}
We remark that with this relaxed condition, Algorithm \ref{Alg1} is well defined. In fact, if at iteration $k$ the procedure does not stop at Step 4, then $Z_k$ is a descent direction (see Lemma \ref{lemma2}), and for all $\rho_1\in(0,1)$ there exists $t>0$ such that the non--monotone Zhang--Hager condition \eqref{eq11} holds by continuity, for $\tau>0$ sufficiently small(a proof of this fact appears in \cite{zhang2004nonmonotone}). In addition, it follows form Step 4, \eqref{eq11} and Lemma \ref{lemma2} that
\begin{eqnarray}
\mathcal{F}(R_{X_k}[\tau Z_k]) & \leq & C_k + \rho_1\tau \langle \nabla_{\mathcal{M}}\mathcal{F}(X_{k}), Z_k \rangle \nonumber \\
                                 & \leq & C_k - \rho_1\epsilon_1\tau ||F(X_k)||^2, \nonumber
\end{eqnarray}
which implies that the relaxed condition \eqref{eq11b} is also verified for all $\tau>0$ that satisfy \eqref{eq11}.
\end{remark}

\begin{remark}
The bottleneck of Algorithm \ref{Alg1} appears in step 2, since to calculate $\sigma_k$, we must compute the Riemannian gradient of $\mathcal{F}(\cdot)$, which implies evaluating the Jacobian of $F(\cdot)$ (see equation \eqref{eq2}). However, given a retraction $R_{X}[\cdot]$, $\sigma_k$ can be approximated using finite differences as follow
\begin{eqnarray}
\sigma_k & = & \langle \nabla_{\mathcal{M}}\mathcal{F}(X_k),F(X_k) \rangle  \nonumber \\
         & = & \mathcal{DF}(X_k)[F(X_k)] \nonumber \\
         & = & \lim_{t\to 0} \frac{\mathcal{F}(R_{X_k}[tF(X_k)]) - \mathcal{F}(X_k)}{t} \nonumber \\
         & \approx &  \frac{\mathcal{F}(R_{X_k}[hF(X_k)]) - \mathcal{F}(X_k)}{h},    \label{aprox_deriv}
\end{eqnarray}
where $0< h \ll 1$ is a small real number. The fact that this approximation does not need the explicit knowledge of the Jacobian operator is useful for large--scale problems.
\end{remark}

The following lemma establishes that in most cases $\tau_k^{RBB1}$ is positive.
\begin{lemma}\label{lemma1}
Let $\tau_{k+1}^{RBB1}$ be computed as in \eqref{eq6}, then $\tau_{k+1}^{RBB1} > 0$ when one of the following
cases holds
\begin{enumerate}
  \item $\langle \mathcal{T}_{\tau_kZ_k}[F(X_k)], F(X_{k+1}) \rangle < 0$;
  \item $\langle \mathcal{T}_{\tau_kZ_k}[F(X_k)], F(X_{k+1}) \rangle > 0 $ \textrm{and} $||F(X_{k+1})|| < ||\mathcal{T}_{\tau_kZ_k}[F(X_k)]||$.
\end{enumerate}
\end{lemma}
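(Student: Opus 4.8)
The plan is to reduce the claim to a single scalar inequality by unwinding the definitions. Write $W_k := \mathcal{T}_{\tau_k Z_k}[F(X_k)]$, so that by \eqref{eq7} we have $\hat{S}_k = -\tau_k s(\sigma_k) W_k$ and by \eqref{eq8} that $\hat{Y}_k = F(X_{k+1}) - W_k$. First I would observe that under either hypothesis $W_k \neq 0$: in case (1) because $\langle W_k, F(X_{k+1})\rangle \neq 0$, and in case (2) because $\|W_k\| > \|F(X_{k+1})\| \geq 0$. Since $\tau_k > 0$ and $s(\sigma_k)^2 = 1$, this yields $\langle \hat{S}_k, \hat{S}_k\rangle = \tau_k^2 \|W_k\|^2 > 0$; hence the quotient defining $\tau_{k+1}^{RBB1}$ in \eqref{eq6} will be seen to be well defined, and its sign equals the sign of $s(\sigma_k)\langle \hat{S}_k, \hat{Y}_k\rangle$. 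It therefore suffices to show $s(\sigma_k)\langle \hat{S}_k, \hat{Y}_k\rangle > 0$.

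Next I would compute this quantity explicitly. Using bilinearity of the Riemannian metric together with $s(\sigma_k)^2 = 1$,
\[
s(\sigma_k)\langle \hat{S}_k, \hat{Y}_k\rangle = -\tau_k\big(\langle W_k, F(X_{k+1})\rangle - \|W_k\|^2\big) = \tau_k\big(\|W_k\|^2 - \langle W_k, F(X_{k+1})\rangle\big).
\]
As $\tau_k > 0$, the proof is then reduced to verifying the inequality $\|W_k\|^2 > \langle W_k, F(X_{k+1})\rangle$ in each of the two cases.

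Finally I would dispatch the two cases. In case (1), $\langle W_k, F(X_{k+1})\rangle < 0 < \|W_k\|^2$, so the inequality is immediate. In case (2), the Cauchy--Schwarz inequality together with the assumption $\|F(X_{k+1})\| < \|W_k\|$ gives $\langle W_k, F(X_{k+1})\rangle \leq \|W_k\|\,\|F(X_{k+1})\| < \|W_k\|^2$. In either case $s(\sigma_k)\langle \hat{S}_k, \hat{Y}_k\rangle = \tau_k\big(\|W_k\|^2 - \langle W_k, F(X_{k+1})\rangle\big) > 0$, so in particular $\langle \hat{S}_k, \hat{Y}_k\rangle \neq 0$ and $\tau_{k+1}^{RBB1} = \langle \hat{S}_k, \hat{S}_k\rangle / \big(s(\sigma_k)\langle \hat{S}_k, \hat{Y}_k\rangle\big) > 0$, which is the claim.

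I do not anticipate a genuine obstacle: the argument is merely bilinearity of the metric plus the Cauchy--Schwarz inequality. The only point deserving a word of care is checking that $\hat{S}_k \neq 0$ (equivalently $W_k \neq 0$), so that the ratio in \eqref{eq6} is meaningful and its sign is governed solely by the denominator; as noted above, each of the two hypotheses already forces $W_k \neq 0$, so no appeal to invertibility of the (scaled) vector transport is needed.
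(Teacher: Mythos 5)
Your proof is correct and follows essentially the same route as the paper's: both reduce the sign of $\tau_{k+1}^{RBB1}$ to the sign of $-\langle \mathcal{T}_{\tau_kZ_k}[F(X_k)],\hat{Y}_k\rangle = \|\mathcal{T}_{\tau_kZ_k}[F(X_k)]\|^2 - \langle \mathcal{T}_{\tau_kZ_k}[F(X_k)],F(X_{k+1})\rangle$, handle case (1) immediately and case (2) via Cauchy--Schwarz. Your explicit check that $\mathcal{T}_{\tau_kZ_k}[F(X_k)]\neq 0$ under either hypothesis is a small tidying-up the paper leaves implicit, but it does not change the argument.
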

\begin{proof}
Since $\tau_{k+1}^{RBB1} = s(\sigma_k)\frac{\langle \hat{S}_{k},\hat{S}_{k}\rangle}{\langle \hat{S}_{k},\hat{Y}_{k}\rangle} = \frac{-1}{\tau_k}\frac{||\hat{S}_{k}||^2}{\langle \mathcal{T}_{\tau_kZ_k}[F(X_k)],\hat{Y}_{k}\rangle}$ then $\verb"sign"(\tau_{k+1}^{RBB1}) = \verb"sign"(-\langle \mathcal{T}_{\tau_kZ_k}[F(X_k)],\hat{Y}_{k}\rangle)$. Suppose that (a) holds, then
$$\langle \mathcal{T}_{\tau_kZ_k}[F(X_k)],\hat{Y}_{k}\rangle = \langle \mathcal{T}_{\tau_kZ_k}[F(X_k)],F(X_{k+1})\rangle - ||\mathcal{T}_{\tau_kZ_k}[F(X_k)]||^2 < 0,$$
which implies that $\tau_{k+1}^{RBB} > 0$.\\

On the other hand, if (b) holds, from the Cauchy--Schwarz inequality, we find that
$$ 0 < \langle \mathcal{T}_{\tau_kZ_k}[F(X_k)], F(X_{k+1}) \rangle \leq ||\mathcal{T}_{\tau_kZ_k}[F(X_k)]||\, ||F(X_{k+1})|| < ||\mathcal{T}_{\tau_kZ_k}[F(X_k)]||^{2}, $$
and so
$$ \langle \mathcal{T}_{\tau_kZ_k}[F(X_k)],\hat{Y}_{k}\rangle = \langle \mathcal{T}_{\tau_kZ_k}[F(X_k)],F(X_{k+1})\rangle - ||\mathcal{T}_{\tau_kZ_k}[F(X_k)]||^2 < 0, $$
and hence $\tau_{k+1}^{RBB1} > 0$, which proves the lemma.
\end{proof}
The same theoretical result is verified for spectral parameter $\tau_{k+1}^{RBB2}$, and therefore it is also valid for the adaptive parameter $\tau_{k+1}^{RBB}$.\\

Notice that if the transporter $\mathcal{T}[\cdot]$ is isometric (see Definition \ref{def3}), then the second condition of item (b) in Lemma \ref{lemma1} is reduced to $||F(X_{k+1})|| < ||F({X_k})||$. In addition, observe that if we set $\eta = 0$ in Algorithm \ref{Alg1}, then we have that $\mathcal{F}(X_{k+1})< \mathcal{F}(X_k)$ for all $k\in\mathbb{N}$. Therefore this condition would always be verified under these choices of $\eta$ and $\mathcal{T}[\cdot]$.

\begin{lemma}\label{lemma2}
Assume that Algorithm \ref{Alg1} does not terminate. Let $\{Z_k\}$ be an infinite sequence of tangent direction generated by Algorithm \ref{Alg1}. Then $Z_{k}$ is a descent direction for the merit function $\mathcal{F}(\cdot)$ at $X_k$, for all $k\geq 0$.
\end{lemma}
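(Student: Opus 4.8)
The plan is to simply compute the inner product $\langle \nabla_{\mathcal{M}}\mathcal{F}(X_k), Z_k \rangle$ directly from the definition of $Z_k$ and then argue that it is strictly negative using the fact that the algorithm has not terminated. First I would observe that, since Algorithm \ref{Alg1} does not terminate, at iteration $k$ the outer \texttt{while} condition holds, so $\|F(X_k)\| > \epsilon > 0$, and in particular $F(X_k)\neq 0_{X_k}$; moreover the process does not stop at Step 4, which means $|\sigma_k| \geq \epsilon_1\|F(X_k)\|^2 > 0$, so $\sigma_k \neq 0$ and the quantity $s(\sigma_k) = \sigma_k/|\sigma_k| \in \{-1,1\}$ is well defined.

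Next I would substitute $Z_k = -s(\sigma_k)F(X_k)$ into the inner product and use bilinearity of the Riemannian metric together with the definition $\sigma_k = \langle \nabla_{\mathcal{M}}\mathcal{F}(X_k),F(X_k)\rangle$:
\begin{eqnarray}
\langle \nabla_{\mathcal{M}}\mathcal{F}(X_k), Z_k \rangle
  &=& \langle \nabla_{\mathcal{M}}\mathcal{F}(X_k), -s(\sigma_k)F(X_k) \rangle \nonumber \\
  &=& -s(\sigma_k)\,\langle \nabla_{\mathcal{M}}\mathcal{F}(X_k), F(X_k) \rangle \nonumber \\
  &=& -s(\sigma_k)\,\sigma_k \;=\; -|\sigma_k|. \nonumber
\end{eqnarray}
Combining this with the bound $|\sigma_k| \geq \epsilon_1\|F(X_k)\|^2$ coming from Step 3--4 gives the stronger estimate
$$ \langle \nabla_{\mathcal{M}}\mathcal{F}(X_k), Z_k \rangle = -|\sigma_k| \leq -\epsilon_1\|F(X_k)\|^2 < 0, $$
which holds for every $k\geq 0$ for which the algorithm has not stopped, and hence shows that $Z_k$ is a descent direction for $\mathcal{F}(\cdot)$ at $X_k$.

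I do not anticipate any real obstacle here: the argument is essentially the identity $-s(\sigma_k)\sigma_k = -|\sigma_k|$, and the only thing that needs care is making explicit why $s(\sigma_k)$ is well defined, namely that non-termination rules out both $F(X_k)=0_{X_k}$ (outer loop) and $\sigma_k = 0$ (Step 3). It is worth recording the sharper inequality $\langle \nabla_{\mathcal{M}}\mathcal{F}(X_k), Z_k\rangle \leq -\epsilon_1\|F(X_k)\|^2$ rather than just negativity, since this quantitative form is exactly what will be needed later to invoke the Zhang--Hager line search and to drive the global convergence analysis.
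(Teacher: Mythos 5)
Your proposal is correct and follows essentially the same route as the paper's proof: substitute $Z_k=-s(\sigma_k)F(X_k)$, use the non-termination of Step 3--4 to get $|\sigma_k|\geq\epsilon_1\|F(X_k)\|^2>0$, and compute $\langle\nabla_{\mathcal{M}}\mathcal{F}(X_k),Z_k\rangle=-|\sigma_k|<0$. The only addition is your explicit quantitative bound $-|\sigma_k|\leq-\epsilon_1\|F(X_k)\|^2$, which the paper leaves implicit here but indeed uses later in the convergence analysis.
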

\begin{proof}
From Step 6 in Algorithm \ref{Alg1} we have $Z_k = - s(\sigma_k)F(X_k)$, where $\sigma_k = \langle \nabla_{\mathcal{M}}\mathcal{F}(X_k), F(X_k) \rangle$. Since Algorithm \ref{Alg1} does not terminate, form Step 3 we obtain $|\sigma_k| \geq \epsilon_1 ||F(X_k)||^{2} > 0$. Now, observe that
\begin{eqnarray}
\langle \nabla_{\mathcal{M}}\mathcal{F}(X_k), Z_k \rangle & = & - s(\sigma_k)\langle \nabla_{\mathcal{M}}\mathcal{F}(X_k), F(X_k) \rangle  \nonumber \\
                                                          & = & - s(\sigma_k)\sigma_k \nonumber \\
                                                          & = & - \frac{\sigma_k^2}{|\sigma_k|} \nonumber \\
                                                          & = & - |\sigma_k| \nonumber \\
                                                          & < & 0. \nonumber
\end{eqnarray}
Therefore, we conclude that $Z_k$ is a descent direction for $\mathcal{F}(\cdot)$ at $X_k$, for all $k\geq 0$.
\end{proof}

\section{Convergence Analysis}
\label{sec:4}
In this section, we analyse the global convergence for our Algorithm \ref{Alg1} under mild assumptions. Our analysis consists on a generalization of the global convergence of line--search methods for unconstrained optimization, presented in \cite{zhang2004nonmonotone} and an adaptation of Theorem 4.3.1 in \cite{absil2009optimization}.\\

The following lemma establishes that ${C_k}$ is bounded below by the sequence $\{\mathcal{X_k}\}$.
\begin{lemma}\label{lemma3}
 Let $\{X_k\}$ be an infinite sequence generated by Algorithm \ref{Alg1}. Then for the iterates generated by Algorithm \ref{Alg1}.
 \begin{equation}
 \mathcal{F}(X_k) \leq  C_k, \quad \forall k. \label{CA_eq1}
 \end{equation}
\end{lemma}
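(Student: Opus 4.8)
The natural approach is induction on $k$, exploiting the recursive definition of $C_k$ together with the non-monotone Armijo-type acceptance test. For the base case $k=0$, the initialization in Algorithm \ref{Alg1} sets $C_0 = \mathcal{F}(X_0)$, so the inequality holds trivially (with equality).

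For the inductive step, assume $\mathcal{F}(X_k) \leq C_k$. The plan is to combine three facts. First, from the definition $C_{k+1} = (\eta Q_k C_k + \mathcal{F}(X_{k+1}))/Q_{k+1}$ with $Q_{k+1} = \eta Q_k + 1$, one may write $C_{k+1}$ as the convex combination $C_{k+1} = \frac{\eta Q_k}{Q_{k+1}} C_k + \frac{1}{Q_{k+1}} \mathcal{F}(X_{k+1})$, where the two coefficients are nonnegative and sum to $1$ (here $Q_k \geq 1$ for all $k$ since $Q_0 = 1$ and $\eta \geq 0$). Second, the line search at Step 7 guarantees the accepted step produces $\mathcal{F}(X_{k+1}) = \mathcal{F}(R_{X_k}[\tau_k Z_k]) \leq C_k - \rho_1 \epsilon_1 \tau_k \|F(X_k)\|^2 \leq C_k$, since $\rho_1, \epsilon_1, \tau_k > 0$; in particular $\mathcal{F}(X_{k+1}) \leq C_k$. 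Third, a convex combination of two numbers, one of which is $C_k$ and the other of which is $\leq C_k$, is itself $\leq C_k$; and since $\mathcal{F}(X_{k+1})$ appears in that convex combination we can also bound $\mathcal{F}(X_{k+1}) \leq C_{k+1}$ directly. Concretely,
\begin{eqnarray}
C_{k+1} - \mathcal{F}(X_{k+1}) & = & \frac{\eta Q_k C_k + \mathcal{F}(X_{k+1})}{Q_{k+1}} - \mathcal{F}(X_{k+1}) \nonumber \\
& = & \frac{\eta Q_k (C_k - \mathcal{F}(X_{k+1}))}{Q_{k+1}} \;\geq\; 0, \nonumber
\end{eqnarray}
using $\mathcal{F}(X_{k+1}) \leq C_k$ from the line search and $\eta Q_k / Q_{k+1} \geq 0$. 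This yields $\mathcal{F}(X_{k+1}) \leq C_{k+1}$, closing the induction.

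I do not anticipate a genuine obstacle here: the result is essentially a bookkeeping identity about convex combinations, and the only substantive ingredient is that the backtracking inner loop at Step 7 terminates and delivers $\mathcal{F}(X_{k+1}) \leq C_k$, which is already guaranteed by the well-definedness of the algorithm (argued via Lemma \ref{lemma2} and the remark following Algorithm \ref{Alg1}). The one point deserving a line of care is confirming $Q_k \geq 1$ (hence $Q_{k+1} > 0$ and the coefficients are well-defined and nonnegative), which follows immediately by induction from $Q_0 = 1$ and $Q_{k+1} = \eta Q_k + 1 \geq 1$ with $\eta \in [0,1)$.
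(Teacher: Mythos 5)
Your proof is correct and rests on exactly the same two ingredients as the paper's: the acceptance test \eqref{eq11b} gives $\mathcal{F}(X_{k+1}) \leq C_k$, and then the convex-combination identity $C_{k+1} - \mathcal{F}(X_{k+1}) = \frac{\eta Q_k}{Q_{k+1}}\left(C_k - \mathcal{F}(X_{k+1})\right) \geq 0$ closes the argument. The paper packages this same algebra as monotonicity of the auxiliary function $\psi(\alpha) = \frac{\alpha C_{k-1} + \mathcal{F}(X_k)}{\alpha+1}$ evaluated at $\alpha = \eta Q_{k-1}$, so the difference is purely presentational (and your inductive hypothesis is in fact never used, since the line-search bound alone suffices).
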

\begin{proof}
Firstly, we define $\psi_k:\mathbb{R}\to \mathbb{R}$ by
\begin{equation}
\psi(\alpha) = \frac{\alpha C_{k-1} + \mathcal{F}(X_k)}{\alpha + 1}, \nonumber
\end{equation}
observe that the derivative of $\psi(\alpha)$ is
\begin{equation}
\dot{\psi}(\alpha) =  \frac{C_{k-1} - \mathcal{F}(X_k)}{(\alpha + 1)^2}. \nonumber
\end{equation}
it follows from the non--monotone condition \eqref{eq11b} that
\begin{equation}
\mathcal{F}(X_k) \leq C_{k-1} - \rho_1\epsilon_1\tau ||F(X_k)||^{2} < C_{k-1},
\end{equation}
which implies that $\dot{\psi}(\alpha)\geq 0$ for all $\alpha\geq 0$. Hence, the function $\psi(\cdot)$ is nondecreasing, and $\mathcal{F}(X_k) = \psi(0) \leq \psi(\alpha)$ for all $\alpha \geq 0$. Then, taking $\bar{\alpha} = \eta Q_{k-1}$ we obtain
\begin{equation}
\mathcal{F}(X_k) = \psi(0) \leq \psi(\bar{\alpha}) = C_k,
\end{equation}
which completes the proof.
\end{proof}

In order to prove the global convergence of our proposed algorithm, we need the following asymptotic property.
\begin{lemma}\label{lemma4}
 Any infinite sequence $\{X_k\}$ generated by Algorithm \ref{Alg1} verifies the following property
 \begin{equation}
 \lim_{k\to\infty} \tau_k ||F(X_k)||^2 = 0.
 \end{equation}
\end{lemma}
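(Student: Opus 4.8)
The plan is to exploit the telescoping structure of the Zhang--Hager sequence $\{C_k\}$ together with the sufficient-decrease inequality \eqref{eq11b}. First I would record the recursion $Q_{k+1}=\eta Q_k+1$ with $Q_0=1$, which gives $Q_k=\sum_{j=0}^{k}\eta^{j}\le\frac{1}{1-\eta}$ for all $k$ when $\eta\in[0,1)$ (and $Q_k\equiv 1$ when $\eta=0$). Next, from the update $C_{k+1}=\frac{\eta Q_k C_k+\mathcal{F}(X_{k+1})}{Q_{k+1}}$ and the accepted step condition $\mathcal{F}(X_{k+1})=\mathcal{F}(R_{X_k}[\tau_k Z_k])\le C_k-\rho_1\epsilon_1\tau_k\|F(X_k)\|^2$, I would substitute to obtain
\begin{equation}
C_{k+1}\;\le\;\frac{\eta Q_k C_k + C_k - \rho_1\epsilon_1\tau_k\|F(X_k)\|^2}{Q_{k+1}}\;=\;C_k-\frac{\rho_1\epsilon_1\tau_k\|F(X_k)\|^2}{Q_{k+1}}. \nonumber
\end{equation}
Hence $\{C_k\}$ is nonincreasing, and since by Lemma \ref{lemma3} we have $\mathcal{F}(X_k)\le C_k$, the sequence $\{C_k\}$ is bounded below provided $\mathcal{F}$ is bounded below on the relevant sublevel set; I would invoke the standing assumption (stated with the convergence analysis, analogous to Theorem 4.3.1 in \cite{absil2009optimization} and the hypotheses in \cite{zhang2004nonmonotone}) that $\mathcal{F}$ is bounded below, so $\{C_k\}$ converges to some finite limit $C_\ast$.

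Then I would sum the displayed inequality: rearranging gives $\frac{\rho_1\epsilon_1\tau_k\|F(X_k)\|^2}{Q_{k+1}}\le C_k-C_{k+1}$, and telescoping over $k=0,1,\dots,N$ yields
\begin{equation}
\sum_{k=0}^{N}\frac{\rho_1\epsilon_1\tau_k\|F(X_k)\|^2}{Q_{k+1}}\;\le\;C_0-C_{N+1}\;\le\;C_0-C_\ast\;<\;\infty. \nonumber
\end{equation}
Using the uniform bound $Q_{k+1}\le\frac{1}{1-\eta}$, this gives $\sum_{k=0}^{\infty}\tau_k\|F(X_k)\|^2\le\frac{C_0-C_\ast}{\rho_1\epsilon_1(1-\eta)}<\infty$. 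A convergent series has vanishing terms, so $\tau_k\|F(X_k)\|^2\to 0$, which is the claim.

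The main obstacle I anticipate is justifying that $\{C_k\}$ is bounded below, i.e., making sure the boundedness-below hypothesis on $\mathcal{F}$ (or at least on its values along the generated sequence) is really in force — this is where the unstated standing assumption of Section \ref{sec:4} must be used, and one should also confirm that all iterates stay in a set where this holds, using that $\mathcal{F}(X_k)\le C_k\le C_0$ keeps the sequence in the sublevel set $\{X:\mathcal{F}(X)\le\mathcal{F}(X_0)\}$. A secondary technical point is the careful bookkeeping of the index shift between the line-search condition (written with $C_k$ and the trial step $\tau$) and the accepted quantities $\tau_k$, $X_{k+1}$, $C_{k+1}$; once the inequality $\mathcal{F}(X_{k+1})\le C_k-\rho_1\epsilon_1\tau_k\|F(X_k)\|^2$ is pinned down the rest is the routine telescoping argument above, essentially identical to the Euclidean proof in \cite{zhang2004nonmonotone}.
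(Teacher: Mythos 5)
Your proof is correct and follows essentially the same telescoping argument as the paper. The only difference is that the ``main obstacle'' you flag dissolves immediately here: the merit function is $\mathcal{F}(X)=\tfrac{1}{2}\|F(X)\|^2\ge 0$, so $C_k\ge\mathcal{F}(X_k)\ge 0$ by Lemma \ref{lemma3} and no extra boundedness-below hypothesis is needed, which is exactly how the paper closes that step.
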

\begin{proof}
By the construction of Algorithm \ref{Alg1} and using Lemma \ref{lemma2}, we have
\begin{equation}
C_{k+1} = \frac{\eta Q_kC_k + \mathcal{F}(X_{k+1})}{Q_{k+1}} < \frac{(\eta Q_{k} + 1)C_k}{Q_{k+1}} = C_k. \label{inqC}
\end{equation}
Hence, $\{C_k\}$ is monotonically decreasing and bounded below by zero, therefore it converges to some limit $C^{*}\geq 0$. It follows from Step 7 and Step 12 in Algorithm \ref{Alg1} that
\begin{equation}
\sum_{k=0}^{\infty} \frac{\rho_1\epsilon_1\tau_k ||F(X_k)||^2}{Q_{k+1}} \leq \sum_{k=0}^{\infty}C_k - C_{k+1} = C_0 - C_{*} < \infty.
\end{equation}
Merging this result with the fact that $Q_{k+1} = 1 + \eta Q_k = 1 + \eta + \eta^2Q_{k-1} = \cdots = \sum_{i=0}^{k} \eta^{i} < (1-\eta)^{-1}$, we have
\begin{equation}
\lim_{k\to \infty} \tau_k||F(X_k)||^2   = 0,
\end{equation}
which proves the lemma.
\end{proof}

The theorem below establishes a global convergence property concerning Algorithm \ref{Alg1}. The proof can be seen as a modification to that of Theorem 3.4 in \cite{yao2020riemannian}, and to that of Theorem 4.1 in \cite{oviedoimplicit}.
\begin{theorem}\label{theorem1}
Algorithm \ref{Alg1} either terminates at a finite iteration $j\in\mathbb{N}$
where $|\langle \nabla_{\mathcal{M}}\mathcal{F}(X_j), F(X_j) \rangle | < \epsilon ||F(X_j)||^{2}$, or  it generates an infinite sequence $\{X_k\}$ such that
$$\lim_{k\to \infty} \emph{inf } ||F(X_k)|| = 0. $$
\end{theorem}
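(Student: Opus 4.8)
The plan is to argue by contradiction: suppose the algorithm does not terminate, so it produces an infinite sequence $\{X_k\}$ with $|\sigma_k| \geq \epsilon_1 \|F(X_k)\|^2$ for all $k$ (otherwise it would stop at Step 4), and suppose in addition that $\liminf_{k\to\infty} \|F(X_k)\| > 0$. Then there exist $c>0$ and $k_0$ with $\|F(X_k)\| \geq c$ for all $k \geq k_0$. From Lemma~\ref{lemma4} we have $\tau_k \|F(X_k)\|^2 \to 0$, so combining these gives $\tau_k \to 0$. This is the engine of the contradiction: a vanishing step length must be "explained" by the line-search backtracking, and that in turn forces a contradiction with the descent estimate from Lemma~\ref{lemma2}.

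First I would make the backtracking explanation precise. Since $\tau_k \to 0$ while the initial trial step in Step 15 is bounded below by $\tau_m > 0$, for all large $k$ the inner while-loop at Step 7 executes at least once, meaning the previous trial value $\tau_k/\delta$ failed the acceptance test \eqref{eq11b}:
\begin{equation}
\mathcal{F}\!\left(R_{X_k}\!\left[\tfrac{\tau_k}{\delta} Z_k\right]\right) > C_k - \rho_1 \epsilon_1 \tfrac{\tau_k}{\delta} \|F(X_k)\|^2 \geq \mathcal{F}(X_k) - \rho_1 \epsilon_1 \tfrac{\tau_k}{\delta} \|F(X_k)\|^2, \nonumber
\end{equation}
where the last inequality uses Lemma~\ref{lemma3} ($\mathcal{F}(X_k) \leq C_k$). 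Rearranging,
\begin{equation}
\frac{\mathcal{F}\!\left(R_{X_k}\!\left[\tfrac{\tau_k}{\delta} Z_k\right]\right) - \mathcal{F}(X_k)}{\tau_k/\delta} > -\rho_1 \epsilon_1 \|F(X_k)\|^2. \nonumber
\end{equation}

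Next I would pass to the limit along a convergent subsequence. Since $\{Z_k\}$ satisfies $\|Z_k\| = \|F(X_k)\|$ and (by an assumed standing compactness hypothesis, or boundedness of the level set $\{X : \mathcal{F}(X) \leq C_0\}$ which contains all $X_k$ by Lemmas~\ref{lemma3}--\ref{lemma4}) one may extract a subsequence $\{X_k\}_{k\in\mathcal{K}}$ with $X_k \to X_*$ and $Z_k \to Z_*$, where $\|Z_*\| = \liminf\|F(X_k)\| \geq c > 0$; the sign $s(\sigma_k)$ is eventually constant on a further subsequence, so $Z_* = -s_* F(X_*)$. Using the smoothness of $\mathcal{F}$ and of the retraction $R$, and the defining property $\mathcal{D}R_{X_*}[0_{X_*}] = \mathrm{id}$, the left-hand difference quotient above converges to the directional derivative $\mathcal{DF}(X_*)[Z_*] = \langle \nabla_{\mathcal{M}}\mathcal{F}(X_*), Z_* \rangle$ as $\tau_k/\delta \to 0$. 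By the computation in Lemma~\ref{lemma2}, $\langle \nabla_{\mathcal{M}}\mathcal{F}(X_*), Z_* \rangle = -|\sigma_*|$ where $\sigma_* = \langle \nabla_{\mathcal{M}}\mathcal{F}(X_*), F(X_*)\rangle$, and by continuity of the inner product and the finite-iteration non-termination, $|\sigma_*| \geq \epsilon_1 \|F(X_*)\|^2 \geq \epsilon_1 c^2 > 0$. Taking the limit in the displayed inequality yields
\begin{equation}
-|\sigma_*| \;\geq\; -\rho_1 \epsilon_1 \|F(X_*)\|^2, \qquad\text{i.e.}\qquad |\sigma_*| \;\leq\; \rho_1 \epsilon_1 \|F(X_*)\|^2 < \epsilon_1 \|F(X_*)\|^2, \nonumber
\end{equation}
since $\rho_1 \in (0,1)$. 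This contradicts $|\sigma_*| \geq \epsilon_1 \|F(X_*)\|^2$, completing the argument.

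The main obstacle I anticipate is the limiting step: one needs a compactness/boundedness assumption on the iterates (or on a level set of $\mathcal{F}$) to extract the convergent subsequence, and one must handle carefully the interchange of the $\tau_k/\delta \to 0$ limit with the difference quotient — this requires uniform control (e.g. a Lipschitz-type or second-order bound on $\mathcal{F}\circ R$ near $X_*$) so that the convergence of the quotient to $\mathcal{DF}(X_*)[Z_*]$ is legitimate even though both the base point $X_k$ and the direction $Z_k$ are moving with $k$. The sign function $s(\sigma_k)$ being only piecewise constant is a minor nuisance handled by passing to a further subsequence on which it is constant, which is harmless since $|\sigma_k|$ is bounded away from zero along $\mathcal{K}$. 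The rest is routine continuity bookkeeping.
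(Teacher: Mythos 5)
Your proposal is correct and follows essentially the same route as the paper: contradiction via $\liminf\|F(X_k)\|>0$, Lemma \ref{lemma4} forcing $\tau_k\to 0$, the failed backtracking trial $\tau_k/\delta$ combined with $\mathcal{F}(X_k)\le C_k$ from Lemma \ref{lemma3}, and a limit passage yielding $|\sigma_*|\le\rho_1\epsilon_1\|F(X_*)\|^2$ against $|\sigma_*|\ge\epsilon_1\|F(X_*)\|^2$. The only cosmetic difference is that the paper applies the mean value theorem to $\psi_k(\tau)=\mathcal{F}(R_{X_k}[\tau Z_k])$ to replace the difference quotient by a derivative at an intermediate point before taking limits (which addresses the uniformity concern you raise), and it likewise tacitly assumes an accumulation point exists, deferring the compactness hypothesis to Corollary \ref{coro1}.
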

\begin{proof}
Let us assume that Algorithm \ref{Alg1} does not terminate, and let $X_{*}$ be any accumulation point of the sequence $\{X_k\}$. We may assume that $\lim_{k\to \infty} X_k = X_{*}$, taking a subsequence if necessary. By contradiction, suppose that there exists $\epsilon_0 > 0$ such that
\begin{equation}
||F(X_k)||^2 > \epsilon_0, \quad \forall k\geq 0. \label{eq0_teo}
\end{equation}
In view of \eqref{eq0_teo} and Lemma \ref{lemma4} we have
\begin{equation}
  \lim_{k\to\infty} \textrm{inf } \tau_k = 0. \label{eq1_teo}
\end{equation}
Firstly, we define the curve $Y_k(\tau) = R_{X_k}[-\tau Z_k]$ for all $k\in \mathbb{N}$, which is smooth due to the differentiability of the retraction $R_{X}[\cdot]$. Since the parameter $\tau_k>0$ is chosen by carrying out a backtracking process, then $\tau_k = \delta^{m_k}\tau_k^{RBB}$, for all $k$ greater than some $\bar{k}$, where  $m_k\in\mathbb{N}$ is the smallest positive integer number such that the relaxed nonmonotone condition \eqref{eq11b} is fulfilled. Thus, the scalar $\bar{\tau} = \frac{\tau_k}{\delta}$ violates the condition \eqref{eq11b}, i.e., it holds
\begin{equation}
-\rho_1\epsilon_1\bar{\tau} ||F(X_k)||^{2} < \mathcal{F}(Y_k(\bar{\tau})) - C_k \leq \mathcal{F}(Y_k(\bar{\tau})) - \mathcal{F}(X_k), \quad \forall k\geq \bar{k}, \label{eq2_teo}
\end{equation}
where the last inequality is obtained using Lemma \ref{lemma3}.\\

Let us set $\psi_k(\tau) := \mathcal{F} \circ Y_k(\tau)$, then \eqref{eq2_teo} is equivalent to
\begin{equation}
- \frac{\psi_k(\bar{\tau}) - \psi_k(0)}{\bar{\tau}-0} < \rho_1\epsilon_1 ||F(X_k)||^{2}, \quad \forall k\geq \bar{k}. \label{eq3_teo}
\end{equation}
It follows from the mean value theorem, that there exists $t\in(0,\bar{\tau})$ such that $-\dot{\psi}_k(t) < \rho_1\epsilon_1||F(X_k)||^{2}$ for all $k\geq \bar{k}$, or equivalently
\begin{equation}
-\langle \nabla_{\mathcal{M}}\mathcal{F}(Y_k(t)), \dot{Y}_k(t) \rangle < \rho_1\epsilon_1||F(X_k)||^{2}, \quad \forall k\geq \bar{k}. \label{eq4_teo}
\end{equation}

In view of the continuity of functions $s(\cdot)$, $\nabla_{\mathcal{M}}\mathcal{F}(\cdot)$, $F(\cdot)$, $Y_k(\cdot)$, the smoothness and local rigidity condition of the retraction $R_X[\cdot]$, and taking limit in \eqref{eq4_teo}, we arrive at
\begin{equation}
-\langle \nabla_{\mathcal{M}}\mathcal{F}(X_{*}), Z_{*} \rangle \leq \rho_1\epsilon_1||F(X_{*})||^{2}, \nonumber
\end{equation}
or equivalently,
\begin{equation}
|\sigma_{*}| \leq \rho_1\epsilon_1||F(X_{*})||^{2}, \label{eq4b_teo}
\end{equation}
where $|\sigma_{*}| = |\langle \nabla_{\mathcal{M}}\mathcal{F}(X_{*}), F(X_{*}) \rangle|$. Since Algorithm \ref{Alg1} does not terminate, form Step 3 we have
\begin{equation}
|\sigma_{k}|\geq \epsilon_1 ||F(X_k)||^{2}. \label{eq4c_teo}
\end{equation}
Applying limits in \eqref{eq4c_teo} we find that $|\sigma_{*}| \geq \epsilon_1||F(X_{*})||^{2}$. Merging this last result with \eqref{eq4b_teo}, we arrive at
\begin{equation}
||F(X_{*})||^{2}\leq \rho_1||F(X_{*})||^{2}. \nonumber
\end{equation}
Since $0<\rho_1 < 1$ then we have $||F(X_{*})|| = 0$, this last result contradicts \eqref{eq0_teo}, which completes the proof.
\end{proof}

By Theorem \ref{theorem1}, we obtain the following theoretical consequence under compactness assumptions.
\begin{corollary}\label{coro1}
Let $\{X_k\}$ be an infinite sequence generated by Algorithm \ref{Alg1}. Suppose that the level set $\mathcal{L} = \{ X\in\mathcal{M}: \mathcal{F}(X) \leq \mathcal{F}(X_0) \}$ is compact (which holds in particular when the Riemannian manifold $\mathcal{M}$ is compact). Then
$$ \lim_{k\to \infty} ||F(X_k)|| = 0. $$
\end{corollary}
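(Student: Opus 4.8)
The plan is to upgrade the $\liminf$ conclusion of Theorem~\ref{theorem1} to a genuine limit by exploiting compactness of $\mathcal{L}$. The first observation is that the whole sequence stays in $\mathcal{L}$: by \eqref{inqC} the sequence $\{C_k\}$ is monotonically decreasing, so $C_k\le C_0=\mathcal{F}(X_0)$ for every $k$, and Lemma~\ref{lemma3} then gives $\mathcal{F}(X_k)\le C_k\le \mathcal{F}(X_0)$, i.e. $X_k\in\mathcal{L}$ for all $k$. Since $\mathcal{L}$ is compact, $\{X_k\}$ has at least one accumulation point, and any subsequence of $\{X_k\}$ admits a further subsequence converging in $\mathcal{L}$.

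The core of the argument is the following strengthening of Theorem~\ref{theorem1}: \emph{every} accumulation point $X_*$ of $\{X_k\}$ satisfies $F(X_*)=0_{X_*}$. To prove this, let $X_{k_j}\to X_*$ and suppose for contradiction that $F(X_*)\neq 0_{X_*}$, so that $\|F(X_{k_j})\|^2\to\|F(X_*)\|^2>0$. By Lemma~\ref{lemma4}, $\tau_{k_j}\|F(X_{k_j})\|^2\to 0$, and hence $\tau_{k_j}\to 0$; in particular, for $j$ large the backtracking loop in Step~7 has performed at least one reduction, so that $\bar{\tau}_{k_j}:=\tau_{k_j}/\delta$ violates \eqref{eq11b}. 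From here I would reproduce, along the index set $\{k_j\}$, the chain of estimates in the proof of Theorem~\ref{theorem1}: setting $\psi_{k_j}(\tau):=\mathcal{F}(R_{X_{k_j}}[\tau Z_{k_j}])$, dividing the violated inequality by $\bar{\tau}_{k_j}$ and using Lemma~\ref{lemma3}, the mean value theorem yields $t_j\in(0,\bar{\tau}_{k_j})$ with $-\dot{\psi}_{k_j}(t_j)<\rho_1\epsilon_1\|F(X_{k_j})\|^2$; passing to the limit as $j\to\infty$, using $t_j\to 0$, the smoothness and local rigidity of the retraction, and the continuity of $s(\cdot)$, $\nabla_{\mathcal{M}}\mathcal{F}(\cdot)$ and $F(\cdot)$, gives $|\sigma_*|\le\rho_1\epsilon_1\|F(X_*)\|^2$. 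On the other hand, non--termination at Step~3 yields $|\sigma_{k_j}|\ge\epsilon_1\|F(X_{k_j})\|^2$, so $|\sigma_*|\ge\epsilon_1\|F(X_*)\|^2$ in the limit. Combining the two inequalities with $0<\rho_1<1$ forces $\|F(X_*)\|=0$, a contradiction.

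It remains to close the argument by contradiction. If $\|F(X_k)\|\not\to 0$, there exist $\epsilon_0>0$ and a subsequence $\{X_{k_j}\}$ with $\|F(X_{k_j})\|\ge\epsilon_0$ for all $j$; by compactness of $\mathcal{L}$ a further subsequence converges to an accumulation point $X_*$ of $\{X_k\}$, and continuity of $F$ gives $\|F(X_*)\|\ge\epsilon_0>0$, contradicting the previous paragraph. Therefore $\lim_{k\to\infty}\|F(X_k)\|=0$.

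I expect the main obstacle to be the middle step, namely checking that the proof of Theorem~\ref{theorem1} localizes to a single accumulation point: one must verify that $\tau_{k_j}\to 0$ indeed forces the backtracking inequality to be active along the subsequence (so that $\tau_{k_j}/\delta$ is a legitimate rejected step violating \eqref{eq11b}), and that each limiting operation in that proof remains valid when carried out along $\{k_j\}$ rather than the full sequence. Everything else is the bookkeeping already present in Lemmas~\ref{lemma3}--\ref{lemma4} together with a routine subsequence extraction.
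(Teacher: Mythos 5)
Your proof is correct and follows essentially the same route as the paper: show $X_k\in\mathcal{L}$ for all $k$ via Lemma~\ref{lemma3} and the monotonicity of $\{C_k\}$, extract a convergent subsequence from a hypothetical bad subsequence by compactness, and derive a contradiction at the accumulation point. The one substantive difference is that you explicitly prove the strengthening ``every accumulation point of $\{X_k\}$ is a zero of $F$'' by re-running the backtracking/mean-value argument of Theorem~\ref{theorem1} along the subsequence, whereas the paper simply asserts that $\|F(X_*)\|\geq\epsilon_0$ ``contradicts Theorem~\ref{theorem1}.'' Strictly speaking, the $\liminf$ statement of Theorem~\ref{theorem1} is not contradicted by one accumulation point at which $F$ is nonzero, so the extra step you supply is exactly what is needed to make the corollary's proof airtight; your version is therefore slightly more careful than the paper's own. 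The only minor point to watch in that middle step is the sign $s(\sigma_{k_j})\in\{-1,1\}$, which may oscillate along the subsequence, but since $-\langle\nabla_{\mathcal{M}}\mathcal{F}(X_{k_j}),Z_{k_j}\rangle=|\sigma_{k_j}|$ the limit $|\sigma_*|$ is obtained regardless (or after one further subsequence extraction), just as in the paper's Theorem~\ref{theorem1}.
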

\begin{proof}
Let $\{X_k\}$ be an infinite sequence generated by Algorithm \ref{Alg1}. It follows from Lemma \ref{lemma3}, the strict inequality \eqref{inqC} and the construction of
Algorithm \ref{Alg1} that
$$ \mathcal{F}(X_{k+1}) \leq C_{k+1} < C_k < \cdots < C_1 < C_0 = \mathcal{F}(X_0), $$
which implies that $X_k\in \mathcal{L}$, for all $k\geq 0$.\\

Now, by contradiction let us suppose that there is a subsequence $\{X_k\}_{k\in\mathcal{K}}$ and $\epsilon_0>0$ such that
\begin{equation}\label{eq1_coro}
  ||F(X_k)|| \geq \epsilon_0,
\end{equation}
for any $k\in\mathcal{K}$. Since $X_k\in \mathcal{L}$, for all $k\geq 0$ and $L$ is a compact set, we have that $\{X_k\}_{k\in\mathcal{K}}$ must have an accumulation point $X_{*}\in \mathcal{L}$. Taking limit in \eqref{eq1_coro} and using the continuity of $||F(\cdot)||$, we obtain $||F(X_{*})||\geq \epsilon_0$, which contradicts Theorem \ref{theorem1}.
\end{proof}

\begin{remark}
The main drawback of Algorithm \ref{Alg1} is that it can prematurely terminate with a \emph{bad breakdown} ($| \sigma_k | < \epsilon_1 ||F(X_k)||^2$). One way to remedy this problem is to use $Z_k = -\nabla\mathcal{F}(X_k)$ as the search direction, if a \emph{bad breakdown} occurs at $k$--th iteration; and then in the next iteration, we can retry using the steps of Algorithm \ref{Alg1}. We may even use any tangent direction $Z_k\in T_{X_k}\mathcal{M}$ such that $\langle \nabla\mathcal{F}(X_k), Z_k\rangle < 0$, as long as a \emph{bad breakdown} happens, in order to overcome this difficulty.
\end{remark}

\section{Numerical Experiments}
\label{sec:5}
In order to give further insight into the RSANE method we present the results of some numerical experiments. We test our algorithm on some randomly generated gradient tangent vector fields on three different Riemannian manifold, involving the unit sphere, the Stiefel manifold and oblique manifold. All experiments have been performed
on a intel(R) CORE(TM) i7--4770, CPU 3.40 GHz with 1TB HD and 16GB RAM memory. The algorithm was coded in Matlab with double precision. The running times are always given in CPU seconds. For numerical comparisons, we consider the SANE method proposed in \cite{cruz2003nonmonotone}, and the recently published Riemannian Derivative--Free Conjugate gradient Polak--Ribi\'ere--Polyak method (CGPR) for the numerical solution of tangent vectors field \cite{yao2020riemannian}. The Matlab codes of our RSANE, SANE and CGPR are available in: \url{http://www.optimization-online.org/DB_HTML/2020/09/8028.html}

\section{Implementation details}
In our implementation, in addition to monitoring the residual norm $||F(X_k)||_F$, we also check the relative changes of the two consecutive iterates and their corresponding
residual values
\begin{equation}\label{relres}
\textrm{rel}_k^{X} := \frac{||X_{k+1} - X_{k}||_F}{||X_k||_F} \quad \textrm{and} \quad  \textrm{rel}_k^{F} := \frac{|\mathcal{F}(X_{k+1})-\mathcal{F}(X_k)|}{\mathcal{F}(X_k) + 1}.
\end{equation}
Here $||M||_F$ denotes the Frobenius norm of the matrix $M$. In the case when $M$ is a vector, this norm is reduced to the standard norm on $\mathbb{R}^{n}$. Although the residual $||X_{k+1} - X_{k}||_F$ is meaningless in the Riemannian context, for our numerical experiments, we will only consider Riemannian manifolds embedded in the Euclidean space $\mathbb{R}^{n\times p}$, and the residual $\textrm{rel}_k^{X}$ is well defined for these types of manifolds.\\

We let all the algorithms run up to $K$ iterations and stop them at iteration $k < K$ if $||F(X_k)||_F < \epsilon$, or $\textrm{rel}_{k}^{X} < \epsilon_{X}$ and $\textrm{rel}_{k}^{F} < \epsilon_{F}$, or
$$ \textrm{mean}( [\textrm{rel}_{k - \min{k,T} + 1}^{X},\ldots,\textrm{rel}_{k}^{X}] )\leq 10\epsilon_X \,\, \textrm{and} \,\, \textrm{mean}( [\textrm{rel}_{k - \min{k,T} + 1}^{F},\ldots,\textrm{rel}_{k}^{F}] )\leq 10\epsilon_F.  $$
Here, the defaults values of $\epsilon,\epsilon_X,\epsilon_F$ and $T$ are $1e$-5, $1e$-15, $1e$-15 and $5$, respectively. In addition, in Algorithm \ref{Alg1} we use $\eta = 0.6$, $\tau = 1$e-3 (the initial step--size $\tau$), $\tau_{m} = 1e$-10, $\tau_{m} = 1e$+10, $\delta = 0.2$, $\epsilon_1 = 1e-8$ and $\rho_1 = 1e$-4 as defaults values.

\subsection{Considered manifolds and their geometric tools}
In this subsection, we present three Riemannian manifolds that we will use for the numerical experiments in the remainder subsections, as well as some tools necessary for the algorithms, associated with each manifold, such as vectors transports and retractions.\\

Firstly we considere the \emph{unit sphere} given by
\begin{equation}
S^{n-1} = \{x\in\mathbb{R}^{n}: ||x||_2 = 1\}. \label{sphere}
\end{equation}
It is well--known that the tangent space of the unit sphere at $x\in S^{n-1}$ is given by $T_xS^{n-1} = \{ z\in\mathbb{R}^{n}: z^{\top}x = 0 \}$. Let $S^{n-1}$ be endowed with the inner product inherited from the classical inner product on $\mathbb{R}^{n}$,
$$ \langle \xi_x, \eta_x \rangle_x := \xi_x^{\top}\eta_X,  \quad \forall \xi_x,\eta_x\in T_xS^{n-1},\, x\in S^{n-1}. $$
Then, $S^{n-1}$ with this inner product defines an $n-1$ dimensional Riemannian sub--manifold of $\mathbb{R}^{n}$. The retraction $R_x[\cdot]$ on $S^{n-1}$ is chosen as in \cite{absil2009optimization},
$$ R_x[\xi_x] = \frac{x + \xi_x}{||x + \xi_x||_2}, $$
for all $\xi_x\in T_xS^{n-1}$ and $x\in S^{n-1}$. In addition, for this particular manifold, we consider the vector transport based on orthogonal projection
$$ \mathcal{T}_{\eta_x}[\xi_x] = \xi_x - R_x[\eta_x]R_x[\eta_x]^{\top}\xi_x. $$
Notice that this vector transport verifies the Ring--Wirth non-expansive condition \eqref{eq9}.\\

The second Riemannian manifold considered in this section is the \emph{Stiefel manifold}, which is defined as
\begin{equation}
St(n,p) = \{X\in\mathbb{R}^{n\times p}: X^{\top}X = I_p\}, \label{Stiefel}
\end{equation}
where $I_p\in\mathbb{R}^{p\times p}$ denotes the identity matrix of size $p$--by--$p$. By differentiating both sides of $X(t)^{\top}X(t) = I_p$, we obtain the tangent space of $St(n,p)$ at $X$, given by $T_{X}St(n,p) = \{ Z\in\mathbb{R}^{n\times p}: Z^{\top}X + X^{\top}Z = 0 \}$. Let $St(n,p)$ be endowed with induced Riemannian metric from $\mathbb{R}^{n\times p}$, i.e.,
\begin{equation}
\langle \xi_X, \eta_X \rangle_X := tr(\xi_X^{\top}\eta_X),  \quad \forall \xi_x,\eta_x\in T_XSt(n,p),\, X\in St(n,p).   \label{metric_stiefel}
\end{equation}
The pair $(St(n,p),\langle \cdot,\cdot\rangle)$, where $\langle \cdot,\cdot\rangle$ is the inner product given in \eqref{metric_stiefel}, forms a Riemannian sub--manifold of the Euclidean space $\mathbb{R}^{n\times p}$, and its dimension is equal to $np - \frac{1}{2}p(p+1)$, \cite{absil2009optimization}. For our numerical experiments concerning the Stiefel manifold we will use the retractions, introduced in \cite{absil2009optimization}, given by
\begin{equation}
R_X[\xi_X] = \verb"qf"(X+\xi_X),   \label{qrR_stiefel}
\end{equation}
and the retraction based on the matrix polar decomposition
\begin{equation}
R_X[\xi_X] = (X+\xi_X)\left((X+\xi_X)^{\top}(X+\xi_X)\right)^{-1/2},   \label{polarR_stiefel}
\end{equation}
for all $\xi_X\in T_{X}St(n,p)$ and $X\in St(n,p)$. In equation \eqref{qrR_stiefel}, $\verb"qf"(W)$ denotes the orthogonal factor $Q$ obtained form the QR--factorization of $W$, such that $W = QR,$ where $Q$ belongs $St(n,p)$ and $R\in \mathbb{R}^{p\times p}$ is the upper triangular matrix with strictly positive diagonal elements. Additionally, we consider the following vector transport
\begin{equation}
\mathcal{T}_{\eta_X}[\xi_X] = \xi_X - R_X[\eta_X]\verb"sym"(R_X[\eta_X]^{\top}\xi_X), \label{VT_stiefel}
\end{equation}
where $R_X[\cdot]$ is any of the two retractions defined in \eqref{qrR_stiefel}--\eqref{polarR_stiefel}, and $\verb"sym"(W) = \frac{1}{2}(W^{\top} + W)$ is the function that assigns to the matrix $W$ its symmetric part. This vector transport is inspired by the orthogonal projection on the tangent space of the Stiefel manifold. Moreover, the function \eqref{VT_stiefel} verifies the Ring--Wirth non-expansive condition \eqref{eq9}.\\

Our third example, the \emph{oblique manifold}, is defied as
\begin{equation}
\mathcal{OB}(n,p) = \{X\in\mathbb{R}^{n\times p}: \verb"ddiag"(X^{\top}X) = I_p\}, \label{Oblique}
\end{equation}
where $\verb"ddiag"(W)$ denotes the matrix $W$ with all its off--diagonal entries assigned to zero. The tangent space associated to $\mathcal{OB}(n,p)$ at $X$ is given by $T_{X}\mathcal{OB}(n,p) = \{ Z\in\mathbb{R}^{n\times p} : \verb"ddiag"(X^{\top}Z) = 0\}$. Again, if we endow $OB(n,p)$ with the inner product inherited from the standard inner product on $\mathbb{R}^{n\times p}$, given by \eqref{metric_stiefel}, then the $\mathcal{OB}(n,p)$ becomes an embedded Riemannian manifold on $\mathbb{R}^{n\times p}$. For this particular manifold, we consider the retraction, which appears in \cite{absil2006joint}, defined by
\begin{equation}
R_X[\xi_X] = (X+\xi_X)\verb"ddiag"\left((X+\xi_X)^{\top}(X+\xi_X)\right)^{-1/2},   \label{polarR_oblique}
\end{equation}
for all $\xi_X\in T_{X}\mathcal{OB}(n,p)$ and $X\in \mathcal{OB}(n,p)$. We will use another vector transport based on the orthogonal projection operator on $T_{X}\mathcal{OB}(n,p)$,
\begin{equation}
\mathcal{T}_{\eta_X}[\xi_X] = \xi_X - R_X[\eta_X]\verb"ddiag"(R_X[\eta_X]^{\top}\xi_X), \label{VT_oblique}
\end{equation}
for all $\xi_X,\eta_X\in T_{X}\mathcal{OB}(n,p)$ and $X\in \mathcal{OB}(n,p)$.

\subsection{Eigenvalues computation on the sphere}
For the first test problem, we consider the standard eigenvalue problem
\begin{equation}
Ax = \lambda x, \label{eig_problem1}
\end{equation}
where $A\in\mathbb{R}^{n\times n}$ is a given symmetric matrix. The values of $\lambda$ that verify \eqref{eig_problem1}
are known as eigenvalues of $A$ and the corresponding vectors $x\in\mathbb{R}^{n}$ are the eigenvectors.
A simple way to compute extreme eigenvalues of $A$ is by minimizing (or maximizing) the associated Rayleigh quotient
\begin{equation}
r(x) = \frac{x^{\top}Ax}{x^{\top}x}.  \label{eig_problem2}
\end{equation}
This function is a continuously differentiable map $r:\mathbb{R}^{n}-\{0\}\to \mathbb{R}$, whose gradient is given by $\nabla r(x) = \frac{2}{||x||_2^{2}}( Ax - r(x)x )$. It is clear that any eigenvector $x$ and its corresponding eigenvalue $\lambda$ satisfy that $r(x) = \lambda$, and thus in that case $x$, is a critical point of $r(\cdot)$, i.e., $\nabla r(x) = 0$. By noting that $\nabla r(x) = 0$ if, and only if $x$ is a solution of the following nonlinear system of equations
\begin{equation}
G(x) \equiv Ax - r(x)x = 0,  \label{eig_problem3}
\end{equation}
we can address directly this eigenvalue problem by using SANE method. On the other hand, by introducing the
constraint $||x||_2 = 1$, the nonlinear system $G(x) = 0$ can be cast to the following tangent vector field,
$F:S^{n-1}\to \mathcal{T}S^{n-1}$ defined on a Riemannian manifold (the unit sphere)
\begin{equation}
F(x) \equiv Ax - (x^{\top}Ax)x = 0_x.  \label{eig_problem4}
\end{equation}
Note that for all $x\in S^{n-1}$, we have $x^{\top}F(x) = 0$. Therefore $F(\cdot)$ effectively maps point on the sphere to its corresponding tangent space, i.e., $F(\cdot)$ defines a tangent vector field.\\

For illustrative purposes, in this subsection we compare the numerical performance of the SANE method solving \eqref{eig_problem3}, versus
its generalized Riemannian version RSANE solving \eqref{eig_problem4}. To do this, we consider 40 instances of symmetric and positive definite sparse matrices taken from the UF sparse Matrix collection \cite{davis2011university}\footnote{The SuiteSparse Matrix Collection tool--box is available in \url{https://sparse.tamu.edu/}}, which contains several matrices that arise in real applications. For this experiment, we stop both algorithms when is founded a vector $x_k$ that satisfies the inequality $||G(x_k)||_2< TOL$, where $TOL = \max\{\epsilon, \epsilon ||G(x_0)||_2\}$, with $\epsilon = 2$e-5. Note that for our proposed algorithm $F(x_k) = G(x_k)$, for all $x_k$ generated by RSANE, since our proposal preserves the constraint $||x||_2 = 1$, hence this stop criterion is well defined for our algorithm. The starting vector $x_0\in\mathbb{R}^{n}$ was generated by $x_0 = v/\sqrt{n}$, where $v = (1,1,\ldots,1)^{\top}$.\\

The numerical results associated to this experiments are summarized in Table \ref{tab:1}. In this table, we report the number of iteration (Nitr); the CPU--time in seconds (Time); the residual value $\min\left\{\frac{||G(x_{*})||_2}{||x_0||_2},||G(x_{*})||_2\right\}$ (NrmF), where $x_{*}$ denotes the estimated solution obtained by each algorithm; and the number of function evaluations (Nfe). In the case of SANE this is the number of times that SANE evaluates the $G(\cdot)$ map, while for our RSANE, Nfe denotes the number of times that it evaluates the functional $F(\cdot)$.\\

We observe, from Table \ref{tab:1}, that the RSANE algorithm is a robust option to solve linear eigenvalue problems. The performance of both SANE and RSANE varied over different matrices. We note that our RSANE is very competitive for large--scale problems. Indeed, the RSANE algorithm outperforms its Euclidean version (SANE) in general terms, since RSANE took on average a total of 3644.1 iterations and 31.167 seconds to solve all the 40 problems, while SANE took 34.079 seconds and 3864.3 iterations to solve all the instances on average. It is worth noting that although RSANE failed on problem \emph{bcsstk13}, it can become successful on \emph{bcsstk16}, while both methods take the maximum number of iterations on problems \emph{bcsstk27}, \emph{bcsstk28} and \emph{nasa4704}.\\

In Figure \ref{fig:1}, we plot the convergence history of SANE and RSANE methods for the particular instances ``1138\_bus'' and ``s3rmt3m1'', respectively. From this figure, we note that RSANE can converge faster than SANE, which is illustrated in Figure \ref{fig:1} (b). However the opposite conclusion is obtained from Figure \ref{fig:1}(a).\\

\begin{table}
\centering
\caption{Numerical results for eigenvalues computation.}
\label{tab:1}
\resizebox{13cm}{!}{\begin{tabular}{c c |c c c c| c c c c}
  \hline
  & & \multicolumn{4}{c|}{  SANE }& \multicolumn{4}{ c}{  RSANE  }\\
  \hline
  Name &	n &    Nfe  &   NrmF	& Nitr   & Time	  &Nfe  &   NrmF	& Nitr   & Time		 \\
  \hline
1138\_bus	&	1138	&	10621	&	9.70e-6	&	2791	&	0.284	&	14778	&	7.21e-6	&	3781	&	0.449	\\
af\_0\_k101	&	503625	&	2269	&	1.95e-5	&	692	    &	51.875	&	2074	&	1.99e-5	&	644	    &	49.273	\\
af\_1\_k101	&	503625	&	1798	&	2.00e-5	&	560	    &	39.915	&	1702	&	1.99e-5	&	451	    &	33.397	\\
af\_2\_k101	&	503625	&	2683	&	2.00e-5	&	796	    &	59.179	&	1421	&	1.93e-5	&	451	    &	34.259	\\
af\_3\_k101	&	503625	&	2207	&	1.93e-5	&	664	    &	49.175	&	1757	&	1.92e-5	&	541	    &	42.234	\\
af\_4\_k101	&	503625	&	2281	&	1.98e-5	&	694	    &	51.156	&	1758	&	1.85e-5	&	545	    &	43.613	\\
af\_5\_k101	&	503625	&	2651	&	1.76e-5	&	789	    &	58.730	&	1975	&	1.79e-5	&	607	    &	47.731	\\
af\_shell3	&	504855	&	305	    &	1.99e-5	&	119	    &	6.989	&	328	    &	1.79e-5	&	125	    &	8.026	\\
af\_shell7	&	504855	&	452	    &	1.14e-5	&	165	    &	10.204	&	363	    &	1.53e-5	&	133	    &	8.851	\\
apache1	    &	80800	&	15377	&	1.88e-5	&	3992	&	14.466	&	34224	&	1.54e-5	&	8434	&	34.905	\\
apache2	    &	715176	&	58685	&	1.93e-5	&	13970	&	801.886	&	52821	&	6.89e-6	&	12516	&	785.019	\\
bcsstk10	&	1086	&	2450	&	1.30e-5	&	737	    &	0.063	&	1954	&	1.87e-5	&	603	    &	0.054	\\
bcsstk11	&	1473	&	883	    &	1.76e-5	&	302	    &	0.034	&	5421	&	2.00e-5	&	1501	&	0.199	\\
bcsstk13	&	2003	&	19254	&	1.98e-5	&	3293	&	1.372	&	88608	&	3.08e-4	&	15000	&	6.523	\\
bcsstk14	&	1806	&	76	    &	7.15e-6	&	35	    &	0.006	&	4674	&	1.94e-5	&	1341	&	0.283	\\
bcsstk16	&	4884	&	105001	&	7.49e+0	&	15000	&	24.274	&	1430	&	1.41e-5	&	458	    &	0.386	\\
bcsstk27	&	1224	&	64534	&	3.43e-4	&	15000	&	2.937	&	63935	&	2.11e-4	&	15000	&	3.064	\\
bcsstk28	&	4410	&	62724	&	2.32e-4	&	15000	&	13.300	&	63527	&	1.83e-4	&	15000	&	14.293	\\
bcsstk36	&	23052	&	54355	&	2.00e-5	&	12861	&	76.108	&	26406	&	1.99e-5	&	6561	&	38.742	\\
bcsstm12	&	1473	&	11960	&	1.94e-5	&	3017	&	0.295	&	8632	&	1.99e-5	&	2268	&	0.240	\\
bcsstm23	&	3134	&	14536	&	1.98e-5	&	3670	&	0.500	&	14820	&	2.00e-5	&	3735	&	0.652	\\
bcsstm24	&	3562	&	8255	&	2.00e-5	&	2228	&	0.312	&	6285	&	1.85e-5	&	1721	&	0.267	\\
cfd1	    &	70656	&	2693	&	2.00e-5	&	801	    &	6.265	&	2201	&	1.83e-5	&	659	    &	5.238	\\
ex15	    &	6867	&	197	    &	7.92e-6	&	76	    &	0.027	&	9578	&	1.97e-5	&	2606	&	1.334	\\
fv1	        &	9604	&	371	    &	1.81e-5	&	136	    &	0.061	&	474	    &	1.85e-5	&	170	    &	0.0949	\\
fv2	        &	9801	&	369	    &	1.54e-5	&	138	    &	0.066	&	334	    &	1.51e-5	&	127	    &	0.0637	\\
fv3	        &	9801	&	506	    &	1.82e-5	&	183	    &	0.077	&	371	    &	1.52e-5	&	136	    &	0.0664	\\
Kuu	        &	7102	&	5050	&	3.28e-6	&	1479	&	1.533	&	2674	&	4.82e-6	&	819	    &	0.9281	\\
mhd4800b	&	4800	&	1959	&	1.98e-5	&	605	    &	0.146	&	1784	&	1.96e-5	&	561	    &	0.1363	\\
msc23052	&	23052	&	43296	&	1.99e-5	&	10410	&	62.815	&	37779	&	1.99e-5	&	9217	&	56.9276	\\
Muu	        &	7102	&	29	    &	1.78e-5	&	13	    &	0.006	&	28	    &	1.70e-5	&	12	    &	0.0052	\\
nasa4704	&	4704	&	63663	&	8.21e-5	&	15000	&	7.418	&	63440	&	5.79e-5	&	15000	&	8.264	\\
s1rmq4m1	&	5489	&	9000	&	2.00e-5	&	2411	&	2.394	&	11052	&	1.99e-5	&	2933	&	3.2589	\\
s1rmt3m1	&	5489	&	23019	&	1.98e-5	&	5895	&	4.566	&	13688	&	1.93e-5	&	3592	&	3.1067	\\
s2rmq4m1	&	5489	&	9839	&	2.00e-5	&	2564	&	2.384	&	10292	&	1.96e-5	&	2683	&	2.774	\\
s2rmt3m1	&	5489	&	17422	&	2.00e-5	&	4437	&	3.504	&	21669	&	1.93e-5	&	5385	&	4.9953	\\
s3rmq4m1	&	5489	&	6063	&	1.85e-5	&	1710	&	1.500	&	5422	&	1.20e-5	&	1517	&	1.4757	\\
s3rmt3m1	&	5489	&	12250	&	1.76e-5	&	3295	&	2.467	&	6331	&	1.52e-5	&	1764	&	1.3742	\\
s3rmt3m3	&	5357	&	13090	&	1.89e-5	&	3461	&	2.569	&	9911	&	1.83e-5	&	2669	&	2.1663	\\
sts4098	    &	4098	&	22153	&	1.89e-5	&	5583	&	2.335	&	17664	&	1.93e-5	&	4499	&	2.0278	\\
\hline
\end{tabular}}
\end{table}

\begin{figure}
  \centering
  \begin{center}
  \subfigure[1138\_bus]{\includegraphics[width=6cm]{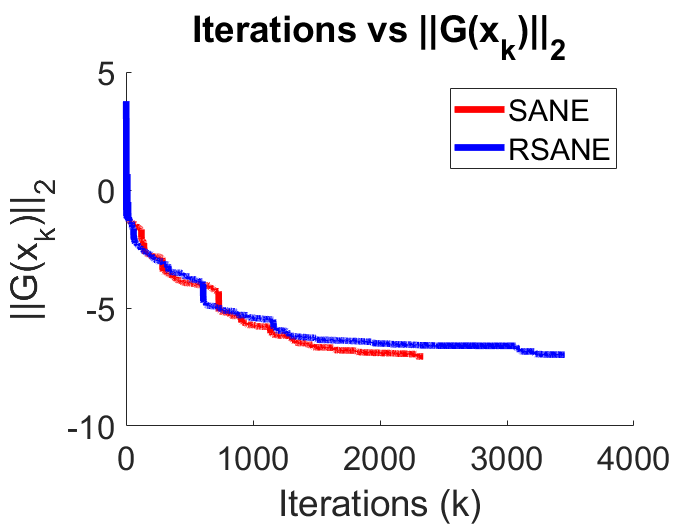}}
  \subfigure[s3rmt3m1]{\includegraphics[width=6cm]{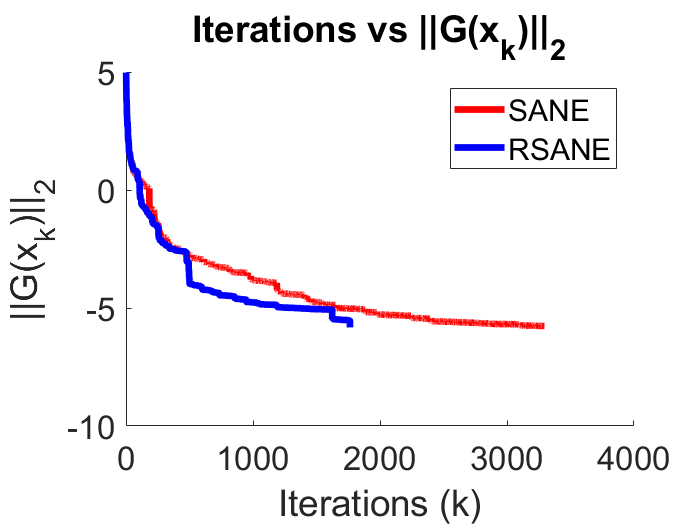}}
  \end{center}
  \caption{Convergence history of SANE and RSANE for the instances ``1138\_bus'' and ``s3rmt3m1''. The y--axis is in logarithmic scale.}
  \label{fig:1}
\end{figure}

\subsection{A nonlinear eigenvalue problem on the Stiefel manifold}
As a second experiment, we investigate the performance our RSANE method applied to deal with the following Stiefel manifold constrained nonlinear eigenvalue problem
\begin{subequations}\label{KKTs}
    \begin{eqnarray}
    H(X)X - X\Lambda    &=&  0, \label{eq1_NEP}\\
     X^{\top}X - I_p    &=&  0. \label{eq2_NEP}
    \end{eqnarray}
\end{subequations}
where $H(X) = L + \mu\,\verb"ddiag"(L^{\dag}\rho(X))$, where $L^{\dag}\in\mathbb{R}^{n\times n}$ is the pseudo--inverse matrix of the discrete Laplacian operator $L$, $\mu>0$ and $\rho(X) = \verb"diag"(XX^{\top})$, here $\verb"diag"(M)\in\mathbb{R}^{n}$ denotes the vector containing the diagonal elements of the matrix $M\in\mathbb{R}^{n\times n}$. Observe that $\Lambda\in\mathbb{R}^{p\times p}$ can be seen as a block of eigenvalues of the nonlinear matrix $H(X)$. In fact, in the special case that the operator $H(X)$ is constant and $p=1$, this problem is reduced to the linear eigenvalue problem \eqref{eig_problem1}. This kind of nonlinear eigenvalue problem appear frequently in total energy minimization in electronic structure calculations \cite{cedeno2018projected,saad2010numerical}. Pre--multiplying by $X^{\top}$ both sides of \eqref{eq1_NEP} and using \eqref{eq2_NEP} we obtain $\Lambda = X^{\top}H(X)X$, so substituting this result in \eqref{eq1_NEP}, we obtain the following tangent vector field defined on the Stiefel manifold
\begin{equation}\label{NEP_F}
  F(X) \equiv H(X)X - XX^{\top}H(X)X = 0_X,
\end{equation}
where $F:St(n,p)\to \mathcal{T}St(n,p)$.\\

In Table \ref{tab:2} we report the results obtained by running the RSANE and CGPR methods on the problem of finding a zero of \eqref{NEP_F}, with six possible choices for the pair $(n,p)$, obtained by varying $n$ and $p$ in $\{100,500,1000\}$ and $\{10,50\}$, respectively. For comparison purposes, we repeat our experiments over 30 different random generated starting points $X_0\in St(n,p)$ for each pair of $(n,p)$ and report the averaged number of iteration (Nitr), the averaged number of the evaluation of the functional $F(\cdot)$ (Nfe), the averaged total computing time in seconds (Time) and the averaged residual (NrmF) given by $\frac{1}{30}\sum_{i=1}^{30}||F(x_{*}^{i})||_F$, where $x_{*}^{i}$ denotes the estimated solution obtained by the respective algorithm for the $i$--th starting point, for all $i\in\{1,2,\ldots,30\}$. In addition, for all the experiments we fix $\mu = 1$, $\epsilon = 1$e-4 as the tolerance for the termination rule based on residual norm $||F(X_k)||_F< \epsilon$. In this table, RSANE\_polar and RSANE\_qr denotes our Algorithm \ref{Alg1} using the retractions defined in \eqref{polarR_stiefel} and \eqref{qrR_stiefel}, respectively. Similarly, CGPR\_polar and CGPR\_qr denotes the Riemannian Derivative--Free conjugate gradient Polak--Ribi\'ere--Polyak method developed in \cite{yao2020riemannian} using the retractions \eqref{polarR_stiefel} and \eqref{qrR_stiefel}, respectively.\\

As shown in Table \ref{tab:2}, RSANE is superior to the Riemannian conjugate gradient method solving nonlinear eigenvalues problems for different choices of $(n,p)$. In particular, we note that for problems with $p = 50$, our proposal basically converges in half of the iterations that the CGPR takes to reach the desired precision.\\

\begin{table}
\centering
\caption{Numerical results for nonlinear eigenvalue problems.}
\label{tab:2}
\resizebox{13cm}{!}{\begin{tabular}{c|c c c c| c c c c}
  \hline
  Method &	    Nitr  &   Time	& NrmF   & Nfe	  &Nitr	    &Time	  &NrmF   & Nfe		 \\
  \hline
  & \multicolumn{4}{c|}{  $(n,p) =  (100,10)$  }& \multicolumn{4}{ c}{  $(n,p) =     (100,50) $  }\\
  \hline
RSANE-polar	&	70.0	&	0.022	&	7.85e-6	&	167.2	&	632.9	&	0.898	&	2.10e-4	&	2170.1	\\
RSANE-qr	&	67.8	&	0.016	&	6.87e-6	&	161.9	&	620.6	&	0.505	&	4.10e-4	&	2126.9	\\
CGPR-polar	&	79.0	&	0.029	&	8.30e-5	&	231.3	&	1209.4	&	2.747	&	1.46e-3	&	6586.4	\\
CGPR-qr	    &	78.2	&	0.024	&	8.05e-5	&	229.3	&	996.8	&	1.296	&	3.30e-4	&	5451.3	\\
  \hline
  & \multicolumn{4}{c|}{  $(n,p) =   (500,10)$  }& \multicolumn{4}{c}{$(n,p) =   (500,50)  $  }\\
  \hline
RSANE-polar	&	65.0	&	0.200	&	8.10e-6	&	151.6	&	311.6	&	2.048	&	7.79e-6	&	955.4	\\
RSANE-qr	&	68.0	&	0.211	&	7.43e-6	&	160.0	&	311.5	&	1.928	&	8.79e-6	&	954.6	\\
CGPR-polar	&	74.3	&	0.284	&	8.14e-5	&	209.3	&	737.5	&	7.757	&	8.86e-5	&	3601.4	\\
CGPR-qr	    &	73.6	&	0.280	&	8.17e-5	&	210.2	&	767.1	&	7.583	&	9.12e-5	&	3731.1	\\
  \hline
  & \multicolumn{4}{c|}{  $(n,p) =   (1000,10)$  }& \multicolumn{4}{c}{$(n,p) =   (1000,50)  $  }\\
  \hline
RSANE-polar	&	68.9	&	0.628	&	7.55e-6	&	159.8	&	316.3	&	5.272	&	8.12e-6	&	968.9	\\
RSANE-qr	&	66.8	&	0.604	&	6.86e-6	&	153.4	&	321.3	&	5.294	&	8.03e-6	&	987.6	\\
CGPR-polar	&	73.5	&	0.820	&	7.57e-5	&	208.5	&	790.3	&	21.224	&	9.36e-5	&	3880.0	\\
CGPR-qr	    &	77.4	&	0.860	&	6.87e-5	&	219.6	&	881.2	&	23.505	&	9.50e-5	&	4347.7	\\
  \hline
\end{tabular}}
\end{table}

\subsection{Joint diagonalization on the oblique manifold}
In this subsection we analyze the numerical behaviour of our RSANE method solving the nonlinear equation based on the tangent vector field obtained from the Riemannian gradient of the scalar function $\mathcal{F}:\mathcal{OB}(n,p)\to \mathbb{R}$, defined by
\begin{equation}
\mathcal{F}(X) = \sum_{i=1}^{N} ||\verb"off"(X^{\top}C_iX)||_F^{2}, \label{JDP}
\end{equation}
where $\verb"off"(W) = W - \verb"ddiag"(W)$ and $C_i\in\mathbb{R}^{n\times n}$ are given symmetric matrices. The minimization of this function on $\mathcal{OB}(n,p)$ is frequently used to perform independent component analysis, see \cite{absil2006joint}. It is well--known that the problem of minimizing \eqref{JDP} is closely related to the problem of finding a zero to the Riemannian gradient of the function $\mathcal{F}(\cdot)$, which is given by
\begin{equation}
\nabla_{\mathcal{OB}(n,p)}\mathcal{F}(X) = \sum_{i=1}^{N} 4C_iX\verb"off"(X^{\top}C_iX), \label{JDP2}
\end{equation}
for details see Section 3 in \cite{absil2006joint}. In order to study the numerical performance of our RSANE compared with the CGPR method, we consider the following tangent vector field
\begin{equation}
F(X) \equiv \sum_{i=1}^{N} 4C_iX\verb"off"(X^{\top}C_iX) = 0_X, \label{JDP3}
\end{equation}
which is obtained form the Riemannian gradient of $\mathcal{F}(\cdot)$.\\

Now, we present a computational comparison considering the RSANE and CGPR \cite{yao2020riemannian} methods on the solution of the tangent vector field \eqref{JDP3}. To do this, we set $N = 5$, and study the numerical behavior of both methods for the pairs of values $(n,p) = (500,100)$ y $(n,p) = (1000,100)$. For each pair $(n,p)$, we repeat 30 independent runs of RSANE and CGPR, generating the symmetric matrices $C_i\in\mathbb{R}^{n\times n}$ as follows
$$  C_i = D + B_i + B_i^{\top},   $$
where $D\in\mathbb{R}^{n\times n}$ was a diagonal matrix, whose diagonal elements are given by $d_{ii} = \sqrt{n+i}$, for all $i\in\{1,2,\ldots,n\}$; and the $B_i's\in\mathbb{R}^{n\times n}$ were randomly generated matrices, whose entries follow a standard normal distribution. This particular structure of the $C_i$'s matrices was taken from \cite{zhu2017riemannian}. The starting point $X_0\in \mathcal{OB}(n,p)$ was randomly generated using the following Matlab commands
$$X_0 = \verb"zeros"(n, p); \quad M = \verb"randn"(n,p)\quad  \textrm{and} \quad  X_0(:,i) = M (:,i)/\verb"norm"(M (:,i)),$$
for all $i = 1, ..., p$. For this comparison, we use $\epsilon = 1$e-$5$ as the tolerance for the stopping rule based on the residual norm $||F(X_k)||_F < \epsilon $.\\

The mean of number of iteration, number of evaluation of $F(\cdot)$, total computational time in seconds, and the residual ``NrmF'' defined as in the previous subsection, are reported in Table \ref{tab:3}. In addition, in Figure \ref{fig:2} we plot the numerical behavior associated to the average residual curve $||F(X_k)||_F$ throughout the iterations, for all the methods and for each pairs $(n,p)$.\\

From Table \ref{tab:3} and Figure \ref{fig:2}, we can see that both RSANE and CGPR can find an approximation of the solution of problem \eqref{JDP3} with the pre--established precision for the residual norm, in the two cases of $(n,p)$ considered. In addition, we clearly observe that RSANE performs better than CGPR in terms of the mean value of iterations and CPU--time.

\begin{table}
\centering
\caption{Numerical results for joint diagonalization Riemannian gradient vector field.}
\label{tab:3}
\begin{tabular}{c c c c c}
  \hline
  Method &	    Nitr  &   Time	& NrmF   & Nfe	 	 \\
  \hline
  \multicolumn{5}{c}{  $(n,p) =  (500,100)$  }\\
  \hline
  RSANE	&	157.8	&	3.950	&	6.85e-6	&	390.0	\\
  CGPR	&	238.8	&	7.589	&	7.84e-6	&	741.8	\\
  \hline
  \multicolumn{5}{c}{  $(n,p) =  (1000,100)$  }\\
  \hline
  RSANE	&	68.1	&	4.894	&	6.50e-6	&	142.5	\\
  CGPR	&	153.4	&	14.342	&	7.59e-6	&	416.1	\\
  \hline
\end{tabular}
\end{table}

\begin{figure}
  \centering
  \begin{center}
  \subfigure[Iterations vs $||F(X_k)||_F$]{\includegraphics[width=6cm]{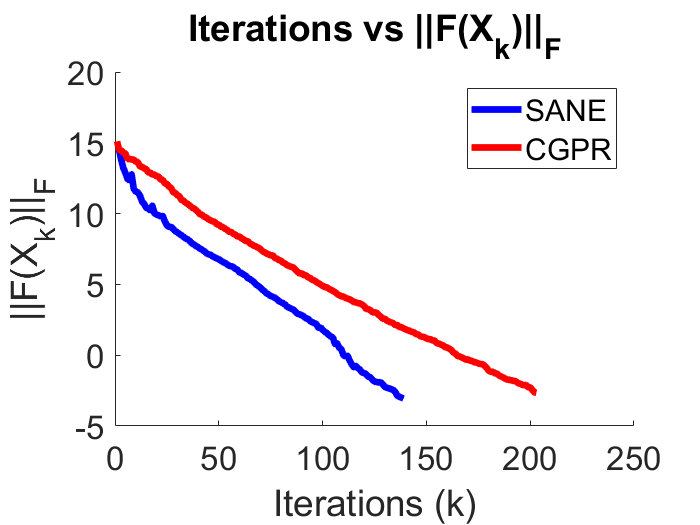}}
  \subfigure[Iterations vs $||F(X_k)||_F$]{\includegraphics[width=6cm]{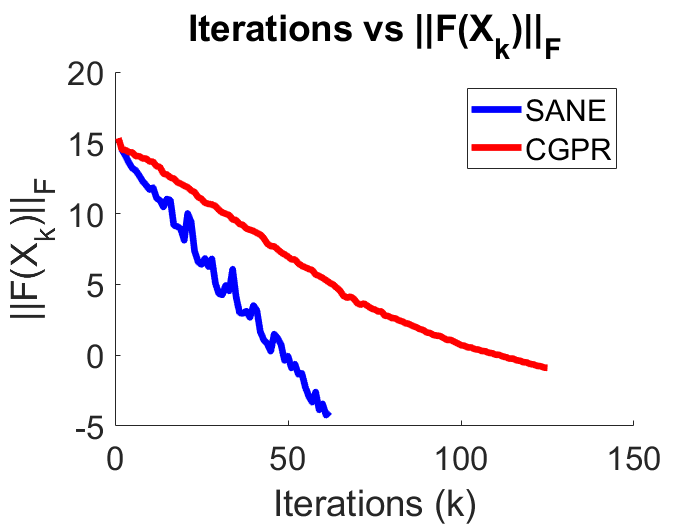}}
  \end{center}
  \caption{Averaged convergence history of RSANE and CGPR, from the same initial point, for the joint diagonalization Riemannian gradient vector field. On the left, $(n,p,N) = (500,100,5)$, and on the right $(n,p,N) = (1000,100,5)$. The y--axis is in logarithmic scale.}
  \label{fig:2}
\end{figure}

\section{Concluding Remarks}
\label{sec:6}
In this paper, a Riemannian residual approach for finding a zero of a tangent vector field is proposed. The new approach can be seen
as an extended version of the SANE method developed in \cite{cruz2003nonmonotone}, for the solution of large--scale nonlinear systems
of equations. Since the proposed method systematically uses the tangent vector field for building the search direction, RSANE is very easy
to implement and has low storage requirements, which is suitable for solving large--scale problems. In addition, our proposal uses a modification
of the Riemannian Barzilai--Borwein step--sizes introduced in \cite{iannazzo2018riemannian}, combined with the Zhang--Hager globalization
strategy \cite{zhang2004nonmonotone}, in order to guarantee the convergence of the associated residual sequence.\\

The preliminary numerical results show that RSANE performs efficiently, dealing with several tangent vector fields considering both real and simulated data, and
different Riemannian manifolds. In particular, RSANE is competitive against its Euclidean version (SANE), solving large--scale eigenvalue problems. Additionally, RSANE outperforms the derivative-free conjugate gradient algorithm recently published in \cite{yao2020riemannian}, on two considered matrix manifolds.


\bibliographystyle{plain}
\bibliography{references}

\end{document}